\documentclass[11pt]{article}

\newif\ifarxiv
\arxivtrue

%% Code logic
\usepackage{ifthen} % for \ifundefined
\usepackage{xifthen} % for \ifthenelse
\usepackage{etoolbox}

% conditionals for multiple document versions
\ifthenelse{\isundefined{\arxivtrue}}{\newif\ifarxiv}{}
\ifthenelse{\isundefined{\aplastrue}}{\newif\ifaplas}{}

%% General AMS math packages
\usepackage{amsmath}
\usepackage{amssymb}
\makeatletter
\@ifclassloaded{llncs}{\newcommand{\qedhere}{}}{\usepackage{amsthm}}
% conditional since llncs class provides its own theorem styling, though see https://tex.stackexchange.com/questions/610827/is-there-an-lncs-alternative-to-newtheoremstyle-of-amsthm
\makeatother

%% Geoemtry
\ifarxiv
  \usepackage[
%    verbose,showframe,
%    a4paper,   % most common on arXiv
%    b5paper,   % standard size, smaller margins
    paperwidth=15.5cm,paperheight=23.5cm,footskip=23pt,  % common book size, nice snug margins
    textwidth=360pt,textheight=595.8pt   % standard textblock size for article class with a4paper 
  ]{geometry}
\fi

%% Math improvements
\usepackage{mathtools}

%% Fonts
\ifarxiv
  \usepackage[T1]{fontenc}
  \usepackage{libertine}
  \usepackage[libertine]{newtxmath}
  \useosf % old-style numerals in text (for dates, page numbers, etc).
  % Must be placed after \usepackage{newtxmath}, so that numerals in math are lining.
  %% sans serif:
  \usepackage[scaled=0.93]{zi4}  % Inconsolata
  \usepackage{microtype}
\fi

%% General style packages:
\usepackage{xcolor}
\definecolor{darkgreen}{rgb}{0,0.45,0}
\definecolor{darkred}{rgb}{0.75,0,0}
\definecolor{darkblue}{rgb}{0,0,0.6}
\usepackage[colorlinks,citecolor=darkgreen,linkcolor=darkred,urlcolor=darkblue]{hyperref}
\usepackage{breakurl}
\usepackage[inline,shortlabels]{enumitem}
\setlist{noitemsep}

%% Theorem definitions and referencing

\usepackage{aliascnt} % fix cleveref breakage according to https://tex.stackexchange.com/a/730760/1635
\newcommand{\cnewtheorem}[2]{%
  \newaliascnt{#1}{theorem}
  \newtheorem{#1}[#1]{#2}
  \aliascntresetthe{#1}
}

\usepackage[nameinlink,capitalize]{cleveref} % must be loaded after hyperref and amsthm, but before newtheorem declarations
\makeatletter
\@ifclassloaded{llncs}
  {\spnewtheorem*{remark*}{Remark}{\itshape}{\rmfamily}
   \newenvironment{customremark}[1][]{\begin{remark*}}{\end{remark*}}}
  {
  \theoremstyle{plain}
  \newtheorem{theorem}{Theorem}
  \cnewtheorem{proposition}{Proposition}
  \cnewtheorem{lemma}{Lemma}
  \cnewtheorem{fact}{Fact}
  \cnewtheorem{corollary}{Corollary}
  \cnewtheorem{conjecture}{Conjecture}

  \theoremstyle{definition}
  \cnewtheorem{definition}{Definition}
  \newtheorem*{remark*}{Remark}
  \cnewtheorem{remark}{Remark}
  \cnewtheorem{example}{Example}

  \newtheorem{innercustomremark}{Remark}
  \newcommand{\custremend}{}
  \newenvironment{customremark}[1][]
    {\@ifmtarg{#1}%
      {\begin{remark*} \renewcommand{\custremend}{\end{remark*}}}
      {\begin{innercustomremark} \renewcommand{\custremend}{\end{innercustomremark}}}}
    {\custremend}
}

\makeatother

%% Miscellaneous packages:
\usepackage{mathpartir}
\usepackage{graphicx}

%% Bibliography spacing
\ifarxiv
\makeatletter
  
   \makeatletter
\fi

%% Optionally capitalising (sub)section titles
\usepackage{titlecaps}
\newcommand{\titlecaseifaplas}[1]{\ifaplas\titlecap{#1}\else#1\fi}
\newcommand{\autocapsection}[1]{\section{\titlecaseifaplas{#1}}}
\newcommand{\autocapsubsection}[1]{\subsection{\titlecaseifaplas{#1}}}

\Addlcwords{a an the}
\Addlcwords{and or but}
\Addlcwords{with as of for}

%% Graphics and diagrams packages:
\usepackage{tikz}
\usepackage{tikz-cd}

\usetikzlibrary{calc}
\usetikzlibrary{fit}
\usetikzlibrary{shapes}
\usetikzlibrary{arrows}
\usetikzlibrary{decorations.markings}
\usetikzlibrary{decorations.pathmorphing}
\usetikzlibrary{positioning}
\usetikzlibrary{intersections}

%%% Local Variables:
%%% mode: latex
%%% TeX-master: "short-arxiv"
%%% End:

%
\ifthenelse{\isundefined{\arxivtrue}}{\newif\ifarxiv}{}
\ifthenelse{\isundefined{\aplastrue}}{\newif\ifaplas}{}

%%%%%%%%%%%%%%%%%%%%%%%%%%%%%%%%
%% Misc non-math-mode styling

\newcommand{\defemph}[1]{\emph{#1}}

% center all figures, following https://tex.stackexchange.com/a/2652/
\makeatletter
\g@addto@macro\@floatboxreset\centering
\makeatother

%%%%%%%%%%%%%%%%%%%%%%%%%%%%%%%%
%% Todos and notes

\usepackage[colorinlistoftodos,prependcaption,textsize=tiny]{todonotes}

\newif\ifshownotes
\shownotestrue
\AtBeginDocument{\ifshownotes \GenericWarning{}{Private notes visible.} \fi}

% hide marginal notes/todos (but leave warnings from todos, and keep placeholder text displayed)
\newcommand{\hidenotes}{\shownotesfalse}

% marginal notes:

\newcommand{\PLL}[1]{\ifshownotes \todo[color=blue!30,author=PLL]{#1} \else \ignorespaces \fi}

% todos: marginal notes that also generate warnings:

\newcommand{\PLLtodo}[1]{\GenericWarning{}{PLL: #1}\PLL{#1}}

% inline placeholder text; generates warning

% standalone footnotes
\makeatletter

\newcommand\freefootnote[1]{%
  \begingroup

  \renewcommand\@makefntext[1]{%
  \parindent 0em%
  \noindent
  #1}

  \footnotetext{#1}%
  \endgroup
}
\makeatother

%%%%%%%%%%%%%%%%%%%%%%%%%%%%%%%%
%% Diagram environment, based on https://tex.stackexchange.com/a/597623/
% so that cleveref calls them “diagram” not “equation”
   
\newcounter{diagram}  % Create counter for diagrams

\newenvironment{diagram}[1][]{%
    \begin{equation}%
    \setcounter{diagram}{\theequation}
    \addtocounter{diagram}{-1}
    \refstepcounter{diagram}
}{%
    \end{equation}%
}
% Add cleveref information for diagram
\crefname{diagram}{diagram}{diagrams}
\crefname{diagram}{Diagram}{Diagrams}
\creflabelformat{diagram}{#2(#1)#3}

% Abbreviated references, for use in diagrams

\DeclareRobustCommand{\abbrevcrefs}{%
\crefname{theorem}{Th.\!}{Th.\!}%
\crefname{definition}{Df.\!}{Df.\!}%
\crefname{lemma}{Lem.\!}{Lem.\!}%
\crefname{proposition}{Pr.\!}{Pr.\!}%
\crefname{corollary}{Cor.\!}{Cor.\!}%
\renewcommand{\crefpairconjunction}{,\nobreakspace}%
}
\DeclareRobustCommand{\dref}[1]{{\abbrevcrefs\textrm{\cref{#1}}}}
% simpler version: \newcommand{\dref}[1]{\ref{#1}}

%%%%%%%%%%%%%%%%%%%%%%%%%%%%%%%%
%% TikZ customisation

% General style settings
\tikzset{
  commutative diagrams/diagrams={row sep=large},
  commutative diagrams/arrow style=tikz,
  arrlabel/.style={font=\footnotesize},
  arrlabelsmall/.style={font=\tiny},
  arrmidlabel/.style={auto=false,fill=white,font=\footnotesize} % to place label *on* arrow shaft
}

%% For using tikz-cd style in \tikzpicture

\tikzset{cd-style/.style={commutative diagrams/every diagram}}
\tikzset{cd-arrow-style/.style={commutative diagrams/.cd, every arrow, every label}}

%% Custom arrow tips & aliases

\pgfdeclarearrow{name=fibtip,means={Triangle[open,angle=60:4.5pt]}}
\pgfdeclarearrow{name=fibbtip,%
  means={Triangle[sep=-0.4pt,open,angle=60:4.5pt,fill=white] Triangle[open,angle=60:4.5pt]}}

\pgfarrowsdeclarealias{c}{c}{left hook}{right hook}
\pgfarrowsdeclarealias{c'}{c'}{right hook}{left hook}

% for indicating pullbacks, adjunctions
\tikzset{drpb/.style={commutative diagrams/.cd, dr, phantom, "\lrcorner", very near start}}
\tikzset{
  adjsymbol/.style={commutative diagrams/.cd, phantom,"\scriptstyle \bot"{sloped}}, % add this in the direction of the left adjoint, with each adjoint shifted by adjshift
  adjshiftl/.style={commutative diagrams/.cd, shift right=-2*#1},
  adjshiftl/.default=1,
  adjshiftr/.style={commutative diagrams/.cd, shift right=2*#1},
  adjshiftr/.default=1,
  adjbendl/.style={commutative diagrams/.cd, bend left=20},
  adjbendr/.style={commutative diagrams/.cd, bend right=20}
}

% further more specialised adjunction macros for the summary diagrams,
% not intended for general re-use
\tikzset{
  dfixanch/.style={commutative diagrams/.cd, start anchor=south, end anchor=north},
  ufixanch/.style={commutative diagrams/.cd, start anchor=north, end anchor=south},
  ladjd/.style={commutative diagrams/.cd, dfixanch, adjbendr, adjshiftr={1.8-0.15*#1}},
  ladjd/.default=0,
  radjd/.style={commutative diagrams/.cd, dfixanch, adjbendl, adjshiftl={1.8-0.15*#1}},
  radjd/.default=0,
}

\tikzset{fib/.code={\pgfsetarrowsend{fibtip}}}
\tikzset{fibb/.code={\pgfsetarrowsend{fibbtip}}}
\tikzset{inj/.code={\pgfsetarrowsstart{c}}}
\tikzset{weq/.style={"{\simeq}"}}
\tikzset{weq'/.style={"{\simeq}"'}}
\tikzset{iso'/.style={"\cong"'}}
\tikzset{sup/.style={label={#1}{auto=left,pos=1}}}  % TODO: improve placement!
\tikzset{sub/.style={label={#1}{auto=right,pos=1}}}

%% Inline arrows

% TikZ inline arrows, adapted from http://tex.stackexchange.com/a/116324 and http://tex.stackexchange.com/a/188351

\newcommand{\generalto}[2]{ \mathrel{\mkern-1mu
  \tikz[baseline={([yshift=-0.55ex]a.south)}]{%
    \node[minimum width=1.5em,align=center,inner xsep=0.5ex,inner ysep=0.15ex] (a) {$\scriptstyle #2$};
    \draw[#1] (a.south west) -- (a.south east);}
 \mkern-1mu}}

\newcommand{\generalfrom}[2]{ \mathrel{\mkern-1mu
  \tikz[baseline={([yshift=-0.55ex]a.south)}]{%
    \node[minimum width=1.5em,align=center,inner xsep=0.5ex,inner ysep=0.15ex] (a) {$\scriptstyle #2$};
    \draw[#1] (a.south east) -- (a.south west);}
  \mkern-1mu}}

\renewcommand{\to}[1][]{ \generalto{->}{#1} }
\newcommand{\tto}[1][]{ \generalto{double,double equal sign distance,-implies}{#1} }

\newcommand{\fibto}[1][]{ \generalto{fib}{#1} }
\newcommand{\injto}[1][]{ \generalto{->,inj}{#1} }
\makeatletter
\newcommand{\equivto}[1][]{\@ifmtarg{#1}{\generalto{->}{\sim}}{\NotYetDefined}}
  % note: with non-empty argument, should put the \sim below the arrow; but not clear to me (PLL) how to do this using \generalto.
\newcommand{\equivfrom}[1][]{\@ifmtarg{#1}{\generalfrom{->}{\sim}}{\NotYetDefined}}
  % note: with non-empty argument, should put the \sim below the arrow; but not clear to me (PLL) how to do this using \generalto.
\makeatother

% TODO: improve, consistentise with the rest?

\newcommand{\downfibbarrow}{\mathrel{\tikz[baseline=2pt,y=2.2ex,x=2.5pt]{
    \useasboundingbox (-1,0) rectangle (1,1) ;
    \draw[-{Triangle[sep=-0.4pt,open,angle=70:3.5pt,fill=white] Triangle[open,angle=70:3.5pt]}]
        (0,1) -- (0,0) ;
      }}}

%% boxed TikZ-cd diagrams, following https://tex.stackexchange.com/a/408523/

\tikzcdset{
    boxedcd/.style={every matrix/.append style={draw,inner ysep=5pt}},
}

%%%%%%%%%%%%%%%%%%%%%%%%%%%%%%%%
%% Math macros

%% Letter-like macros:

\newcommand{\C}{\ensuremath{\mathcal{C}}}
\newcommand{\D}{\mathcal{D}}

\newcommand{\T}{\ensuremath{\mathcal{T}}}

% Classes of maps
\newcommand{\DM}{\mathcal D} %display maps

%% Category names

\newcommand{\catname}[1]{\ensuremath{\mathbf{#1}}}
\newcommand{\catsscript}[1]{{\mathrm{#1}}}

% Base names of categories of structures (without super/subscripts)
% kept in alphabetical order to avoid duplication
% TODO: some duplicates here — unify!
\newcommand{\Cat}{\catname{Cat}}
\newcommand{\Clan}{\catname{Clan}}
\newcommand{\CC}{\catname{CompCat}}

\newcommand{\CompCat}{\CC}
\newcommand{\CwA}{\catname{CwA}}
\newcommand{\CwF}{\catname{CwF}}
\newcommand{\CxlCat}{\catname{CxlCat}}
\newcommand{\DMC}{\catname{DMC}}

\newcommand{\FinSet}{\catname{FinSet}}
\newcommand{\Lex}{\catname{Lex}}

\newcommand{\NatMod}{\catname{NatMod}}
\providecommand{\Top}{}\renewcommand{\Top}{\catname{Top}}
\newcommand{\sDMC}{\catname{sDMC}}
\newcommand{\Set}{\catname{Set}}

% super- and subscripts for variant/sub-2-categories of comp cats etc
\newcommand{\full}{\catsscript{full}}
 
\newcommand{\subcat}{\catsscript{sub}} 
\newcommand{\replete}{\catsscript{repl}} 
\newcommand{\compclosed}{\catsscript{compcl}}

\renewcommand{\split}{\catsscript{spl}} 
\newcommand{\discrete}{\catsscript{disc}} 
\newcommand{\pointed}{\diamond} 

\newcommand{\rooted}{\catsscript{rtd}} 
\newcommand{\trivial}{\catsscript{triv}}
\newcommand{\cxl}{\catsscript{cxl}}

\newcommand{\ps}{\catsscript{ps}}
\newcommand{\pseudo}{\ps}
\newcommand{\str}{\catsscript{str2}}
\newcommand{\strone}{\catsscript{str1}}
\newcommand{\wk}{\catsscript{wk}}

% subcategories of categories of structures
% TODO: consistentise both macro names and contents,
% probably to versions using subscripts

\newcommand{\CCp}{\CC^\mathrm{ps}}
\newcommand{\CCs}{\CC^\str}

\newcommand{\relslice}[2]{#1 \mathbin{\downfibbarrow} #2 }
% alternative: \newcommand{\relslice}[2]{{#1}/\!/{#2}}
% alternative: \newcommand{\relslice}[2]{{#1}({#2})}

%% structure in contextual categories

%% structure in CwF’s/CwA’s
\newcommand{\compext}[2]{\ensuremath{{#1}.{#2}}}

\newcommand{\reindex}[2]{\ensuremath{#1^*{#2}}}

%% Constructions on categories
  \newcommand{\core}[1][]{\ifthenelse{\equal{#1}{}}%
    {\mathrm{core}}%
    {\mathrm{core}(#1)}%
  }
  \newcommand{\supfunctor}[2]{\ifthenelse{\equal{#2}{}}%
    {(-)^\mathrm{#1}}%
    {{#2}^\mathrm{#1}}%
  }

%% Word-like operators

\newcommand{\op}{\operatorname{op}}
\newcommand{\opposite}[1]{\ensuremath{{#1}^{\op}}}

\newcommand{\ob}{\operatorname{ob}}
\newcommand{\mor}{\operatorname{mor}}

\newcommand{\cod}{\operatorname{cod}}

\newcommand{\idmap}{\operatorname{id}}

\newcommand{\sep}{\operatorname{sep}}

\DeclareMathOperator{\im}{im}
\DeclareMathOperator{\repletion}{repl}

%% Binary relations

\newcommand{\iso}{\cong}
\renewcommand{\equiv}{\simeq}

\newcommand{\defeq}{\coloneq}

%% Categorical composition operators 
\newcommand{\oocomp}{\circ} % comp of maps in a 1-cat
 % comp of 1-cells in a 2-cat
 % vert comp of 2-cells in a 2-cat
 % horiz comp of 2-cells in a 2-cat 

%% Syntax of type theory:
% judgements

% logical primitives (and abbreviations)

% defined notions

% For contextual categories and CwA’s:
\newcommand{\Ty}{\mathrm{Ty}}
\newcommand{\Tm}{\mathrm{Tm}}

%% Miscellaneous operations

\newcommand{\arrows}[1]{{#1}^\rightarrow}

\newcommand{\fibration}[3]{\begin{tikzcd}[ampersand replacement=\&] {#1} \ar[r, fib, "{#2}"] \& {#3} \end{tikzcd}}
\newcommand{\functor}[3]{\begin{tikzcd}[ampersand replacement=\&] {#1} \ar[r, "{#2}"] \& {#3} \end{tikzcd}}

%%% Local Variables:
%%% mode: latex
%%% TeX-master: "short-arxiv"
%%% End:

\ifarxiv %
  \newcommand{\titlerunning}[1]{}
  \newcommand{\keywords}[1]{}
  \newenvironment{credits}{}{}
\fi

\hidenotes

\begin{document}

\title{\titlecaseifaplas{Comparing semantic frameworks for dependently-sorted algebraic theories}}
\titlerunning{\titlecaseifaplas{Comparing semantic frameworks for dependently-sorted algebraic theories}}

\ifaplas
  \author{Benedikt Ahrens\inst{1,4}\orcidID{0000-0002-6786-4538} \and Peter LeFanu Lumsdaine\inst{2}\orcidID{0000-0003-1390-2970} \and Paige Randall North\inst{1,3,4}\orcidID{0000-0001-7876-0956}}
  \institute{Delft University of Technology \email{b.p.ahrens@tudelft.nl} \and Stockholm University \email{p.l.lumsdaine@math.su.se} \and Utrecht University \email{p.r.north@uu.nl} \and University of Birmingham}

\else

  \makeatletter
  \renewcommand{\@maketitle}{%
    \newpage \null
    \vskip 1em%
    \begin{center}%
    \let \footnote \thanks
      {\LARGE \@title \par}%
      \vskip 1em%
      {\begingroup \large \let\and\par \addtolength\parskip{1.5ex}
        \begin{center} \@author \end{center}
        \endgroup}%
    \end{center}%
    \vskip 0em %
  }

  \newcommand{\ORCID}[1]{\href{https://orcid.org/#1}{\textsc{orcid} #1}}
  \newcommand{\email}[1]{\nolinkurl{#1}}
  \newcommand{\authorinfo}[4]{{#1 \\ \small #2 \\ \vspace{-0.7ex} \email{#3} --- \ORCID{#4}}}

  \author{\authorinfo{Benedikt Ahrens}{Delft University of Technology, University of Birmingham}{b.p.ahrens@tudelft.nl}{0000-0002-6786-4538}
    \and \authorinfo{Peter LeFanu Lumsdaine}{Stockholm University}{p.l.lumsdaine@math.su.se}{0000-0003-1390-2970}
    \and \authorinfo{Paige Randall North}{Utrecht University}{p.r.north@uu.nl}{0000-0001-7876-0956}}

  \date{}
\fi

\maketitle

\begin{abstract}
  Algebraic theories with dependency between sorts form the structural core of Martin-Löf type theory and similar systems.
  Their denotational semantics are typically studied using categorical techniques; many different categorical structures have been introduced to model them (contextual categories, categories with families, display map categories, etc.).
  Comparisons of these models are scattered throughout the literature, and a detailed, big-picture analysis of their relationships has been lacking.

  We aim to provide a clear and comprehensive overview of the relationships between as many such models as possible.
  Specifically, we take \emph{comprehension categories} as a unifying language and show how almost all established notions of model embed as sub-2-categories (usually full) of the 2-category of comprehension categories.

  \keywords{dependent types \and categorical semantics}
\end{abstract}

\ifaplas
\else
\freefootnote{This paper was presented at the Asian Programming Languages Symposium 2024, and appears in its proceedings \cite{ahrens-lumsdaine-north:comparing}. The present version is lightly revised; numbering is unchanged.}
\fi

\autocapsection{Introduction}

Algebraic theories with dependency between their sorts --- that is, the \emph{generalised algebraic theories} of Cartmell \cite{cartmell:generalised-algebraic-theories} and similar frameworks --- are of interest both in their own right and as the structural core of richer type theories such as Martin-Löf type theory \cite{martin-lof:bibliopolis} and its many extensions, Makkai's First Order Logic with Dependent Sorts \cite{makkai:FOLDS}, and others.

The semantics of such systems are usually studied via categorical abstractions.
A veritable zoo of these have been considered: contextual categories \cite{cartmell:thesis}, categories with attributes \cite{cartmell:thesis}, display map categories \cite{taylor:phd}, categories with families \cite{dybjer:internal-type-theory}, type-categories \cite{pitts:categorial-logic}, comprehension categories \cite{jacobs:comprehension-categories}, C-systems \cite{voevodsky:c-system-of-a-module}, B-systems \cite{voevodsky:b-systems}, natural models \cite{awodey:natural-models}, clans \cite{joyal-2017:notes-on-clans}, and more.
Comparisons between many of these have been given in the literature, more are well-known in folklore, and some may be considered too obvious to need spelling out.

However, no accessible overview of this landscape exists.
Here, we aim to give a clear summary of the relationships between these different structures for easy reference at a glance.
What comparison functors connect different kinds of structures?  When are these comparisons equivalences?  And when they are not, how significant is the difference?

\subsubsection*{\titlecaseifaplas{Summary of results}}

We take the 2-category of \emph{comprehension categories} \cite{jacobs:comprehension-categories} and pseudo maps as a unified general setting;
most other models considered in the literature turn out to embed as certain sub-2-categories theoreof.

The bulk of this paper consists of laying out these embeddings, the comparisons between them, and their properties.
The models fall naturally into two groups: first (\cref{sec:types-as-maps}) those where types are represented as certain “display maps”, and second (\cref{sec:types-as-primitive}) those where types are a primitive notion, such as contextual categories and categories with families.

The resulting relationships are summarised in \Cref{fig:types-as-maps-overview-diagram,fig:types-as-primitive-overview-diagram}.
The classes of comprehension categories used are defined in \cref{def:conditions-on-compcats} below; most are to be read as conditions either on the fibration of types (\emph{split}, \emph{discrete}, etc.) or on the comprehension functor (\emph{fully faithful}, \emph{injective on objects}, etc.).

We suppress in these diagrams the question of which notions assume a terminal object; but see \Cref{rem:rootedness}.

\begin{figure}[!ht]
\begingroup \footnotesize
\newcommand{\boxwide}[1]{\parbox{0.45\textwidth}{\centering \textbf{(#1)}}}
\newcommand{\boxnarrow}[1]{\parbox{0.38\textwidth}{\centering \textbf{(#1)}}}
\begin{tikzcd}[row sep=1.5em,boxedcd]
  & \boxwide{Comprehension categories} \ar[d,ladjd] \\ 
  & \boxwide{Full comprehension categories} \ar[u,inj] \ar[u,adjsymbol,adjshiftl] \ar[d,ladjd]
  \\
  \boxnarrow{Structured display map categories} \ar[r]
  \ar[d,ladjd=1]
  & \boxwide{Comp cats with types a full subcategory of maps} \ar[u,inj,weq'] \ar[u,adjsymbol,adjshiftl] \ar[d,ladjd]
  \\
  \boxnarrow{Display map categories}  \ar[r,<->,weq'] \ar[u,inj] \ar[u,adjsymbol,adjshiftl] \ar[d,ladjd=2]
  & \boxwide{Comp cats with types a full, replete sub\-category of maps} \ar[u,inj] \ar[u,adjsymbol,adjshiftl] \ar[d,ladjd]
  \\
  \boxnarrow{Clans} \ar[r,<->,weq'] \ar[u,inj] \ar[u,adjsymbol,adjshiftl] \ar[d,radjd=-1]
  & \boxwide{Comp cats with types a full, replete, comp'n-closed subcat of maps} \ar[u,inj] \ar[u,adjsymbol,adjshiftl] \ar[d,radjd]
  \\
  \boxnarrow{Finite-limit categories} \ar[r,<->,weq'] \ar[u,inj] \ar[u,adjsymbol,adjshiftr]
  & \boxwide{Comp cats with types = maps} \ar[u,inj] \ar[u,adjsymbol,adjshiftr]
\end{tikzcd} \endgroup
 \caption{Models with types as display maps (\cref{sec:types-as-maps})}
 \label{fig:types-as-primitive-overview-diagram}
\end{figure}

\begin{figure}[!ht] \begingroup \footnotesize
\newcommand{\boxnarrow}[1]{\parbox{0.33\textwidth}{\centering \textbf{(#1)}}}
\newcommand{\boxwide}[1]{\parbox{0.4\textwidth}{\centering \textbf{(#1)}}}
\begin{tikzcd}[row sep=scriptsize,boxedcd]
  & \boxwide{Comprehension categories}
  \\
  \boxnarrow{Cats with attributes}  \ar[r,<->,weq]
  & \boxwide{Discrete comp cats} \ar[u,inj] %
  \\
  \boxnarrow{Cats with families}  \ar[u,<->,weq]
  \\
  \boxnarrow{Natural models}  \ar[u,<->,weq]
  &
  \boxwide{Discrete pointed comp cats} \ar[uu] \ar[d,radjd] \ar[d,adjsymbol,adjshiftl]
  \\
  \boxnarrow{Contextual categories}  \ar[r,<->,weq]
  &
  \boxwide{Contextual discrete comp cats} \ar[u,inj]
  \\
  \boxnarrow{C-systems} \ar[u,<->,weq]
  \\
  \boxnarrow{B-systems} \ar[u,<->,weq]
\end{tikzcd} \endgroup
  \caption{Models with types as primitive (\cref{sec:types-as-primitive})}
  \label{fig:types-as-maps-overview-diagram}
\end{figure}

\subsubsection*{Prerequisites}

Any reader familiar with at least one of the notions of model we survey (categories with families, display map categories, clans, etc.) should be able to follow this paper.
For background on several such models and their motivation for interpreting dependent type theories, we recommend Hofmann \cite{hofmann:syntax-and-semantics} and Jacobs \cite{jacobs:comprehension-categories}.

The 2-categorical language we rely on is minimal — mostly just 2-categories themselves and equivalences and adjunctions between them.
All our 2-categories and functors are strict, and we view 1-categories as locally discrete 2-categories.
For a breezy introduction covering all of these, see Power \cite{power:2-categories}.

\ifarxiv
\fi

\autocapsection{Comprehension categories: a broad church}

Comprehension categories were introduced by Jacobs \cite{jacobs:comprehension-categories} as a common generalisation of earlier models of type dependency.
As he intended, they form a common home in which to compare those notions and others introduced since.
In this section we set up the 2-categories of comprehension categories into which we will later embed the other notions considered, along with key constructions and properties of comprehension categories for later use.

Throughout, we denote isomorphisms by $\iso$, equivalences of (1- and 2-) categories by $\equiv$, and 2-fully-faithful functors (i.e.~inducing equivalences on hom-categories) by $\injto$.

\autocapsubsection{2-categories of comprehension categories}

\begin{definition}\label{def:compcat}
  A \defemph{comprehension category} $\C = (\C,\T,p,\chi)$ consists of
  \begin{enumerate*}[(1)]
  \item a category $\C$ (whose objects we call \defemph{contexts});
  \item a fibration \fibration{\T}{p}{\C} (of \defemph{types}); and
  \item a functor \functor{\T}{\chi}{\arrows{\C}} (\defemph{comprehension}); such that
  \item $\chi$ lies strictly over  $\C$ (that is, $\cod \oocomp \chi = p$), and is cartesian (that is, sends $p$-cartesian maps to pullback squares).
  \end{enumerate*}
   \[
   \begin{tikzcd}[column sep = tiny, row sep = normal, nodes={inner sep=2pt}]
     \T \ar[rr, "\chi"]  \ar[rd, fib, "{p}"'] & &  \arrows{\C}\ar[ld, "\cod"]
     \\
     &
     {\C}
   \end{tikzcd}
 \]

  We write the comprehension $\chi(A)$ of a type $A \in \T_\Gamma$ as $\Gamma.A \fibto A$ (where $\T_\Gamma$ denotes the fiber $p^{-1}\Gamma$). \PLLtodo{consider adding pullback diagram, to show notation and parallel other notions later}
\end{definition}

\begin{definition}\label{def:compat-mor} \leavevmode
\begin{enumerate}
\item A \defemph{pseudo map} $(F,\bar{F},\varphi) : (\C,\T,p,\chi) \to (\C',\T',p',\chi')$ of comprehension categories consists of a functor $F : \C \to \C'$; a functor $\bar{F} : \T \to \T'$ lying (strictly) over $F$, and sending $p$-cartesian maps to $p'$-cartesian maps; and a natural isomorphism $\varphi : \chi' \bar{F} \iso \arrows{F} \chi$ lying (strictly) over the identity natural transformation on $F$ (so $\varphi$ witnesses that $F$ preserves context extension up to isomorphism).
  \begin{equation*}
    \begin{tikzcd}[row sep = tiny, column sep = tiny, inner sep=2pt]
      \T \ar[ddr, fib] \ar[rr, "\chi"] & & \arrows{\C} \ar[ddl] \ar[drrr, "\arrows{F}" near start, crossing over, ""{name=V} near start] \\
     & & & \T' \ar[ddr, fib] \ar[rr, "\chi'"', ""'{name=U}] & & \arrows{\C'} \ar[ddl] \\
     & \C \ar[drrr, "F"] \\
     & & & & \C'
     \ar[from=1-1, to=2-4, "\bar{F}"', crossing over, ""'{name=R} near end]
    \ar[from=R, to=V, Rightarrow, shorten=2em, "\varphi"]
    \end{tikzcd}
    \qquad \quad
   \begin{tikzcd}[row sep=scriptsize,column sep=0]
     F(\Gamma.A) \ar[rr, "\varphi_A"', "\cong" bend left=0] \ar[dr, fib, "F(\chi_{A})"'] 
     & &
     F\Gamma . \bar{F} A %
     \ar[dl, fib, "\chi'_{\bar{F}A}"] 
     \\
     & F\Gamma
   \end{tikzcd}
  \end{equation*}

  \item A \defemph{strict map} is a pseudo map which preserves context extension on the nose; that is, $\chi' \bar{F} = \arrows{F} \chi$, and $\varphi$ is the identity.
\end{enumerate}
\end{definition}

\begin{remark}
  Strict maps of comprehension categories are considered by Blanco \cite{blanco:relating-categorical-approaches}.
  The first source we know for pseudo maps is Curien--Garner--Hofmann \cite[\textsection 5.1]{curien-garner-hofmann:revisiting}\footnote{Note however that their \emph{strict} maps are stronger than ours, strictly preserving chosen cleavings on the fibration of types.}.
  We agree with the latter authors that \emph{maps} of comprehension categories should mean ``pseudo map'' by default; but in the present paper, we will generally explicitly specify whether maps are pseudo or strict.
  Coraglia and Emmenegger \cite[Def.~3.4]{coraglia-emmenegger:2-categorical} further consider \emph{lax} maps, where the natural transformation $\varphi$ of \cref{def:compat-mor} need not be invertible.
\end{remark}

\begin{definition}
A \defemph{transformation} of pseudo maps $(F,\bar{F},\varphi) \tto (G,\bar{G},\gamma)$ consists of a natural transformation $\alpha : F \tto G$, and another $\bar{\alpha} : \bar{F} \tto \bar{G}$ lying (strictly) over $\alpha$, such that for each $A \in \T_\Gamma$ we have $\gamma_A \chi'(\bar{\alpha}_A) = \alpha_{\Gamma . A} \varphi_A$.
\end{definition}

\begin{definition}\label{def:cat_of_compcats}
  We write $\CCp$ (or just $\CC$) for the 2-category of comprehension categories, pseudo maps, and transformations;
  $\CC^{\str}$ for the 2-category of comprehension categories, strict maps, and transformations;
  and $\CC^{\strone}$ for the 1-category of comprehension categories and strict maps.
\end{definition}

\begin{definition}
  Let $(\C,\T,p,\chi)$ be a comprehension category, and $\Gamma \in \C$ any object.
  The \defemph{contextual slice} $\relslice{\C}{\Gamma}$ is the comprehension category in which:
  \begin{enumerate}
  \item objects of $\relslice{\C}{\Gamma}$ are finite sequences $(A_0,\ldots,A_{n-1})$ in which $A_{k} \in \T_{\Gamma.A_0\ldots A_{k-1}}$, for each $0 \leq k < n$;
  \item maps $(A_0,\ldots,A_{n-1}) \to (B_0,\ldots,B_{m-1})$ are maps \[\Gamma.A_0\ldots A_{n-1} \to \Gamma.B_0\ldots B_{m-1}\] in the slice $\C/\Gamma$;
  \item the new fibration of types is the pullback of $\T$ along the functor $\relslice{\C}{\Gamma} \to \C$ sending $(A_0,\ldots,A_{n-1})$ to the context extension $\Gamma.A_0,\ldots,A_{n-1}$;
  \item the new comprehension is given by $(A_0,\ldots,A_{n-1}).B \defeq (A_0,\ldots,A_{n-1},B)$, with the evident projection $\chi(B)$.  
  \end{enumerate}

\noindent
This construction is given for display map categories by Taylor \cite[Def.~8.3.8]{taylor:practical-foundations} and for categories with attributes by Kapulkin and Lumsdaine \cite[Def.~2.4]{kapulkin-lumsdaine:inverse-diagrams}.
\end{definition}

We now delineate various important subclasses of comprehension categories and notation for the resulting full sub-2-categories of $\CompCat$.
\begin{definition}\label{def:conditions-on-compcats}
  A comprehension category $(\C,\T, p, \chi)$ is called:
  \begin{enumerate}
  \item \defemph{full} if $\chi$ is fully faithful (with the 2-category of these denoted $\CC_\full$);
  \item \defemph{subcategorical} if $\chi$ is a full subcategory inclusion, i.e.\ full, faithful, and injective on objects ($\CC_\subcat$);
  \item \defemph{replete} (assuming it is subcategorical) if $\T$ is a replete subcategory of $\arrows{\C}$ ($\CC_{\replete}$);
  \item \defemph{composition-closed} (assuming subcategorical) if $\T$ is closed under composition and includes identities ($\CC_{\subcat,\compclosed})$;
  \item \defemph{trivial} if $\chi$ is an identity ($\CC_\trivial$); \PLLtodo{In hindsight this isn’t a good name: these can be rich and important! And it’s not established. Can we improve it?}
  \item \defemph{discrete} if $p$ is a discrete fibration ($\CC_\discrete$);
  \item \defemph{split} if $p$ is a split fibration ($\CC_\split$).
  \end{enumerate}

  These subscripts combine in the obvious ways: for instance, $\CC_{\full,\split}$ is the 2-category of full comprehension categories where the fibration is split.

  We also sometimes restrict the maps in the split case: we say a map of split comprehension categories is \defemph{split} if it preserves the chosen splitting on the nose, and denote the resulting sub-2-category $\CC_\split^{\split}$.
\end{definition}

\begin{customremark}[\ref*{def:conditions-on-compcats}a]
Many of these conditions are not invariant under equivalence of categories, as they involve on-the-nose equality of objects.
In each case, one can of course generalise them to a property closed under equivalence;
we work with the present versions since they correspond most naturally and tightly to the other structures we are comparing --- display map categories, categories with attributes, and so on.
\PLL{Note: this is the one place where numbering differs slightly from APLAS version — this remark has been moved from after the following definition, replaced with a different remark there.}
\end{customremark}

Lastly we consider some notions which also add restrictions on the maps.
\begin{definition} \label{def:rootedness-etc}
  A \defemph{pointed} comprehension category $\CC$ is one equipped with a distinguished object $\diamond \in \C$; a map is pointed (resp.~strictly pointed) if it preserves $\diamond$ up to specified isomorphism (resp.~on the nose); a transformation is pointed if its value at $\diamond$ commutes with the given isomorphism. We write $\CCp_\pointed$, $\CCs_\pointed$ for the resulting 2-categories.

  A pointed comprehension category is \defemph{rooted} if its distinguished object is terminal (so written $1$) and the functor $\relslice{\C}{1} \to \C$ is essentially surjective (and hence an equivalence) so that every object is isomorphic to some context extension of 1; and 
  \defemph{contextual} if $\relslice{\C}{1} \to \C$ is moreover bijective on objects (and hence an isomorphism), so every object is \emph{uniquely} expressible as an extension of 1.  We write $\CC_\rooted$ and $\CC_\cxl$ for the resulting full sub-2-categories of $\CC_\pointed$.
\end{definition}

\begin{remark} \label{rem:rootedness}
  The question of when to assume rootedness is a flea throughout.
  In the core models of pure dependency --- comprehension categories, display map categories, CwA’s --- it is superfluous: typically, no type-theoretic principle is allowed to distinguish the empty context from others.
  We thus take these notions as unrooted by default (our only significant departure from established definitions).
  Roots arise naturally however as we specialise towards two extremes of the range of models: finite limit categories on the one side, and contextual categories on the other;
  the comparisons of these with more general models therefore involve rooted- or at least pointedness, as for example \Cref{prop:cxl-core-adjoint}.
\end{remark}

\begin{remark} \label{rem:why-2-categories}
    Why do we insist on 2-categories?  1-categories are technically simpler and are used in much of the literature on these structures; e.g., Blanco \cite{blanco:relating-categorical-approaches} compares 1-categories of categories with attributes, contextual categories, and comprehension categories. But general experience from category theory suggests that categorical structures should always be analysed 2-categorically; and, as they should be, these abstract considerations are justified by applications.
    
    Specifically, a comprehensive analysis must include pseudo maps, since maps between non-syntactic comprehension categories are often not strict; and even when they are (typically when the source is contextual or the target replete and so by \Cref{cor:equiv-pseudo-strict-contextual} or \Cref{thm: iso between DMC and comp cat variant} essentially all maps are strict), the strictness of maps may be lost if for instance we pass to strictifications to interpret syntax as in Hofmann \cite{hofmann:lcccs}.

    Once we admit pseudo maps, however, we must also admit 2-cells to keep the resulting (1- or 2-) category well-behaved.  For instance, the syntactic contextual category of a type theory is typically 1-categorically initial in a 1-category of models with strict maps \cite{streicher:semantics-book,boer:initiality}, and bicategorically initial in a 2-category of pseudo maps, but not initial in either sense in a 1-category of pseudo maps.\PLL{Try to recall Vladimir’s old counterexample showing this point too.}
\end{remark}

\autocapsection{Frameworks with types as certain maps}\label{sec:types-as-maps}

In this section, we consider frameworks where types are represented as certain maps of a category — \emph{display maps}.

These first appear in work of the Cambridge group (Hyland, Pitts, and Taylor) from the late eighties \cite{taylor:phd,hyland-pitts:theory-of-constructions} with some variation in details and terminology.
We primarily follow recent literature in our terminology but note historical differences in usage.

We consider three main notions, successively broadening the franchise of display maps by imposing stronger closure conditions:
\begin{enumerate}
\item \emph{display map categories} (and their \emph{structured} variant), assuming just closure under pullback;
\item \emph{clans}, adding closure under composition and identities;
\item \emph{finite-limit categories}, the limiting case in which all maps are display.
\end{enumerate}

\autocapsubsection{Display map categories}\label{sec:dmc}

\begin{definition}[{Hyland--Pitts \cite[\textsection 2.2]{hyland-pitts:theory-of-constructions},  Taylor \cite[Def.~8.3.2]{taylor:practical-foundations}}]\label{def:dmc}
A \emph{display map category}\footnote{These appear in Hyland--Pitts \cite[\textsection 2.2]{hyland-pitts:theory-of-constructions}  as classes satisfying “\textbf{stability}”, and in Taylor \cite[Def.~8.3.2]{taylor:practical-foundations} as \emph{classes of displays}.} is a category $\C$ together with a replete (i.e.\ isomorphism-invariant) subclass $\DM \subseteq \mor (\C)$ of maps (called \emph{display maps} and written $\fibto$), such that display maps pull back along arbitrary maps; that is, for any display map $d$ and map $f$ into its target, there is some display map $f^* d$ that is a pullback of $d$ along $f$:

\begin{diagram}
\label{diagram: pb}
   \begin{tikzcd}[sep=1.5em]
     \cdot \ar[dr,drpb] \ar[dashed,r," "] \ar[d,dashed,fib,"f^* d"'] & \cdot \ar[fib,d,"d"]
     \\ 
     \cdot \ar[r,"f"'] &  \cdot
   \end{tikzcd}
 \end{diagram}
 We call such a $\DM$ a \emph{class of display maps} in $\C$.
\end{definition}

\begin{example}[{\cite[Ex.~8.3.6e]{taylor:practical-foundations}}]
\label{ex: carrable maps}
A map is called \emph{carrable} if it admits pullbacks along arbitrary maps, as in \Cref{diagram: pb}.  Given any class $D$ of carrable maps in a category $\C$, its closure $\DM$ under pullbacks gives a class of display maps in $\C$.
\end{example}

\begin{example}[{Awodey--Warren \cite{awodey-warren}}]
  \label{ex: wfs dmc}
  Important natural examples are given by \emph{weak factorisation systems (WFS's)} and their \emph{algebraic} variants. Given a category with a (possibly algebraic) WFS, we take the display maps to be the right maps of the WFS (often called \emph{fibrations}), which are always stable under pullback. In particular, in any Quillen model category, the fibrations form a class of display maps, yielding as instances Kan fibrations in the category of simplicial sets, or Hurewicz or Serre fibrations in the category of topological spaces.
\end{example}

\begin{lemma}
\label{lem:dmc to comp cat}
  Any display map category $(\C, \DM)$ gives a comprehension category $(\C,\DM,\cod \circ \iota,\iota)$ where $\iota : \DM \injto \arrows{\C}$ is the inclusion of $\D$ viewed as a full subcategory of $\arrows{\C}$.
\end{lemma}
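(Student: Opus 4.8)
The plan is to verify the four clauses of \cref{def:compcat} for the tuple $(\C,\DM,\cod \circ \iota,\iota)$. Two of the clauses are immediate: $\iota$ is a functor because it is the inclusion of a full subcategory of $\arrows{\C}$, and the strictness condition $\cod \circ \iota = p$ holds by the very definition of $p$. So the substance is to check that (i) $p = \cod \circ \iota$ is a fibration, and (ii) $\iota$ is cartesian. Throughout I would lean on the standard fact that a morphism of $\arrows{\C}$ is cartesian for the codomain functor $\cod \colon \arrows{\C} \to \C$ precisely when the corresponding commuting square is a pullback --- a statement that requires no assumption that $\C$ possess all pullbacks.

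To exhibit cartesian lifts, fix an object $d' \colon X' \to Y'$ of $\DM$ and a map $f \colon Y \to Y'$ of $\C$. The defining axiom (\cref{diagram: pb}) provides a display map $f^* d'$ with codomain $Y$, together with a pullback square presenting it as the pullback of $d'$ along $f$. Since $f^* d'$ and $d'$ both lie in $\DM$ and $\DM$ is \emph{full} in $\arrows{\C}$, this square is a morphism $f^* d' \to d'$ of $\DM$ lying over $f$; being a pullback square, it is $\cod$-cartesian in $\arrows{\C}$. Moreover $\cod$-cartesianness transfers along the full inclusion $\DM \injto \arrows{\C}$: the factorisation guaranteed by cartesianness in $\arrows{\C}$ is a square between two display maps, hence automatically lives in the full subcategory $\DM$. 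So the chosen square is $p$-cartesian, and $p$ is a fibration.

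For (ii), since $\iota$ is an inclusion it sends a morphism of $\DM$ to the same commuting square regarded in $\arrows{\C}$; thus I must show that every $p$-cartesian morphism of $\DM$ is already a pullback square. Here I would invoke uniqueness of cartesian lifts: a $p$-cartesian $\alpha \colon d \to d'$ over $f$ is related to the chosen lift $f^* d' \to d'$ constructed above by a vertical isomorphism $d \iso f^* d'$. Since $f^* d' \to d'$ is a pullback square and pullback squares are stable under precomposition with a vertical isomorphism, $\alpha$ is itself a pullback square. This establishes that $\iota$ is cartesian, completing the verification.

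The only point demanding care, I expect, is keeping the two notions of cartesianness apart: a priori ``$p$-cartesian in $\DM$'' is weaker than ``$\cod$-cartesian in $\arrows{\C}$'' because $\DM$ is only a full, not a wide, subcategory, yet the fullness of $\iota$ makes the two coincide for the morphisms at hand, and uniqueness of cartesian lifts lets us upgrade the weaker property back to the genuine pullback condition needed in (ii). Everything else is routine diagram-chasing; repleteness of $\DM$ plays no essential role in this particular verification, since every object in sight is already a display map.
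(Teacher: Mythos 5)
Your proof is correct and takes essentially the same approach as the paper, whose entire proof is the one-line observation that the assumed pullbacks make $p = \cod \circ \iota$ a fibration and $\iota$ cartesian. You have simply (and accurately) filled in the bookkeeping the paper leaves implicit: that fullness of $\DM$ in $\arrows{\C}$ transfers $\cod$-cartesianness of the pullback squares to $p$-cartesianness, and that uniqueness of cartesian lifts up to vertical isomorphism upgrades arbitrary $p$-cartesian morphisms back to pullback squares, so $\iota$ is cartesian.
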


\begin{proof}
   The assumed pullbacks ensure that $\DM \to \C$ is a fibration, and $\iota$ cartesian.
\end{proof}

\begin{definition}
  A map of display map categories is a functor preserving display maps and pullbacks thereof; a transformation of these is simply a natural transformation between functors.
  We write $\DMC$ for the resulting 2-category.
\end{definition}

\begin{theorem}
\label{thm: iso between DMC and comp cat variant}
\Cref{lem:dmc to comp cat} lifts to give an isomorphism and an equivalence
  \[\DMC \iso \CCs_{\replete} \equiv \CC_{\replete}\]
  where these denote the 2-categories of comprehension categories whose comprehension is a replete subcategory inclusion, with strict and pseudo maps respectively (but with all 2-cells in both cases).
\end{theorem}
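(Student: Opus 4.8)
The statement splits into a strict $2$-isomorphism $\DMC \iso \CCs_{\replete}$ and a biequivalence $\CCs_{\replete} \equiv \CC_{\replete}$. I would establish these separately, the first being essentially bookkeeping and the second carrying the real content.

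For $\DMC \iso \CCs_{\replete}$, the plan is to invert the $2$-functor of \Cref{lem:dmc to comp cat} explicitly. Its inverse sends a comprehension category whose comprehension exhibits $\T$ as a replete full subcategory of $\arrows{\C}$ to the display map category $(\C,\DM)$ with $\DM$ the objects of $\T$ regarded as maps of $\C$; repleteness of $\T$ together with cartesianness of $p$ and $\chi$ are exactly repleteness and pullback-stability of $\DM$, and on objects the two passages are mutually inverse (identifying a subcategorical comprehension category with the subcategory it names). The point that makes this a strict bijection on $1$- and $2$-cells is that injectivity-on-objects and fullness of $\chi,\chi'$ force all the extra data: the strictness equation $\chi'\bar{F} = \arrows{F}\chi$ determines $\bar{F} = \arrows{F}|_{\T}$, which is well-defined precisely when $F$ preserves display maps and cartesian precisely when $F$ preserves their chosen pullbacks; and faithfulness pins each $\bar{\alpha}_A$ to the square with legs $\alpha_{\Gamma},\alpha_{\Gamma.A}$, which commutes by naturality of $\alpha$. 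Hence $1$- and $2$-cells on both sides correspond to bare $F$ and $\alpha$, giving the isomorphism.

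For the equivalence, I would show that the identity-on-objects inclusion $\CCs_{\replete} \hookrightarrow \CC_{\replete}$ is a biequivalence. Surjectivity on objects is immediate, so it remains to check that it is a local equivalence. Local full faithfulness is easy: between two strict maps the coherence condition $\gamma_A\,\chi'(\bar{\alpha}_A) = \alpha_{\Gamma.A}\varphi_A$ degenerates, since $\varphi=\gamma=\mathrm{id}$, to the defining condition for a transformation of $\CCs_{\replete}$, so the two notions of transformation coincide — this is precisely where keeping all $2$-cells in $\CCs_{\replete}$ matters.

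The crux is local essential surjectivity, where repleteness of the target is used decisively. Given a pseudo map $(F,\bar{F},\varphi)$ with both ends in $\CCs_{\replete}$, the defining triangle $\chi'(\bar{F}A)\circ\varphi_A = F(\chi A)$ exhibits the map $F(\chi A)$ as isomorphic in $\arrows{\C'}$ to $\bar{F}A \in \T'$; since $\T'$ is replete, $F(\chi A)$ already lies in $\T'$. Thus $G \defeq (F,\arrows{F}|_{\T})$ is a well-defined strict map, and the components of $\varphi$ assemble into an invertible transformation between $G$ and $(F,\bar{F},\varphi)$, its coherence being exactly the defining triangle of $\varphi$. The one subtle check is that $G$ preserves cartesian maps; this holds because cartesianness is invariant under the vertical isomorphism $\varphi$ relating $\arrows{F}|_{\T}$ to the cartesian-preserving $\bar{F}$. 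I expect this straightening step, together with verifying that $\varphi$ is coherent enough to constitute a transformation, to be the main obstacle; everything else reduces to the observation that in the subcategorical setting all data is determined by the underlying context functor.
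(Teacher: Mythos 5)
Your proposal is correct and follows essentially the same route as the paper's proof: both hinge on the observation that all the extra data of a map ($\bar{F}$, $\varphi$, $\bar{\alpha}$) is determined by the underlying functor via fullness and injectivity-on-objects of the comprehension, and that repleteness of the target forces $F(\chi_A)$ to lie in $\T'$, so that every pseudo map strictifies uniquely up to (unique) invertible 2-cell --- the paper merely packages this as a single 2-full-faithfulness claim about the functor $\DMC \to \CC$ with characterised image, where you split it into an explicit inverse plus a strictification biequivalence. One cosmetic slip: plain display map categories carry no \emph{chosen} pullbacks (that is the structured variant), so ``$\bar{F}$ cartesian'' should be read as ``$F$ sends every pullback square of a display map to a pullback square'', which does not affect the argument.
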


\begin{proof}
  The assignation of \cref{lem:dmc to comp cat} underlies a 2-functor $\DMC \to \CC$,
  whose image consists of precisely the replete subcategorical comprehension categories.
  It remains to show that it is 2-fully-faithful, and its image on 1-cells consists precisely of the strict maps.
  
  Given display map categories $(\C,\DM)$, $(\C',\DM')$, a map of comprehension categories $(\C,\DM,\cod \circ \iota,\iota) \to (\C',\DM',\cod \circ \iota',\iota')$ amounts to a functor $F: \C \to \C'$ together with a functor $\bar{F} : \D \to \D'$ preserving cartesian morphisms (i.e.\ pullback squares as in \cref{diagram: pb}) together with natural isomorphisms $\varphi_d : \bar{F} d \iso d$.
  Such data $\bar{F}$, $\varphi$ implies (by repleteness of $\DM$) that $F$ preserves display maps and their pullbacks and hence is a map in $\DMC$.
  Conversely, given that $F$ is such a map, suitable $\bar{F}$, $\varphi$ are given by $\arrows{F} |_{\D}$ and the identity isomorphism (yielding a strict map of comprehension categories), and any other such $(\bar{F},\varphi)$ are uniquely isomorphic to these.

  Finally, 2-cells in $\CC$ are pairs $(\alpha,\bar{\alpha}) : (F,\bar{F},\varphi) \to (G,\bar{G},\gamma)$; but since the comprehension of $(\C',\DM')$ is fully faithful, any such $\alpha$ uniquely determines a suitable $\bar{\alpha}$.
\end{proof}

This theorem justifies regarding display map categories precisely as replete subcategorical comprehension categories.

\begin{theorem}
\label{thm: dmc adjunction} \label{thm:fullification} %
    The following inclusions of subcategories of comprehension categories have left adjoints or are equivalences, as shown below.
    \[
    \begin{tikzcd}[column sep=scriptsize]
      \CC_{\replete}
      \ar[r,inj,adjshiftr,weq]
      &
      \CC_{\subcat}
      \ar[l,adjshiftr]
      \ar[r,inj,adjshiftr,weq]
      &
      \CC_{\full}
      \ar[l,adjshiftr]
      \ar[r,inj,adjshiftr]
      &
      \CC
      \ar[l,adjshiftr] \ar[l,adjsymbol]
    \end{tikzcd}
    \]
  \end{theorem}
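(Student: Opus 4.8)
The three rightward arrows are the inclusions of the full sub-2-categories $\CC_\replete \subseteq \CC_\subcat \subseteq \CC_\full \subseteq \CC$ delineated in \cref{def:conditions-on-compcats}; being inclusions of \emph{full} sub-2-categories they are automatically 2-fully-faithful, which disposes of the $\injto$ labels. It therefore remains to exhibit the three left adjoints and to verify that the first two inclusions are moreover equivalences. My plan is to construct, for each inclusion, an explicit reflector that modifies only the fibration of types $\T$, never the base $\C$, so that the base functor of every unit is $\mathrm{id}_{\C}$.

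For the outer inclusion $\CC_\full \hookrightarrow \CC$ --- the genuine reflection, or \emph{fullification} --- I would take the bijective-on-objects / fully faithful factorisation of the comprehension functor. Given $(\C,\T,p,\chi)$, let $\widetilde{\T}$ have the same objects as $\T$ but with $\Hom_{\widetilde{\T}}(A,B) \defeq \Hom_{\arrows{\C}}(\chi A, \chi B)$, so $\chi$ factors as $\T \to \widetilde{\T} \to \arrows{\C}$ with the second functor $\widetilde{\chi}$ fully faithful; set $\widetilde{p} \defeq \cod \circ \widetilde{\chi}$. The key point to verify is that $(\C,\widetilde{\T},\widetilde{p},\widetilde{\chi})$ is again a comprehension category, and here I would show that a morphism of $\widetilde{\T}$ is $\widetilde{p}$-cartesian precisely when it is a pullback square in $\arrows{\C}$: the cartesian lift of $B$ along $f : \Gamma \to p(B)$ is the image in $\widetilde{\T}$ of the $p$-cartesian lift $\reindex{f}{B} \to B$ of $\T$, whose comprehension square is a pullback since $\chi$ is cartesian, and its universal property as a $\widetilde{p}$-cartesian morphism is exactly the universal property of that pullback. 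This makes $\widetilde{p}$ a fibration and $\widetilde{\chi}$ (being fully faithful) cartesian. The unit is then the evident strict map $(\mathrm{id}_{\C}, \T \to \widetilde{\T}, \mathrm{id})$, and its universal property uses full faithfulness of the target comprehension: given a pseudo map $(F,\bar{F},\varphi)$ into a full $(\C',\T',p',\chi')$, a morphism $\alpha : A \to B$ of $\widetilde{\T}$ is a morphism $\chi A \to \chi B$ of $\arrows{\C}$, and conjugating $\arrows{F}(\alpha)$ by the components of $\varphi$ yields a map $\chi'\bar{F}A \to \chi'\bar{F}B$ in the image of $\chi'$, which descends along the fully faithful $\chi'$ to a unique $\bar{F}A \to \bar{F}B$; this defines the unique extension of $\bar{F}$ along $\T \to \widetilde{\T}$, with the coherence 2-cell transported verbatim.

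For the two inner inclusions I would argue that each is an equivalence, so that its left adjoint is just a chosen quasi-inverse. Both reflectors again fix $\C$ and replace $\T$ by a full subcategory of $\arrows{\C}$: \emph{subcategoricalisation} sends a full comprehension category to the full subcategory of $\arrows{\C}$ spanned by the objects $\chi A$ (thereby making comprehension injective on objects), and \emph{repletion} sends a subcategorical one to the full subcategory of $\arrows{\C}$ spanned by all objects isomorphic to some $\chi A$. In each case the comparison functor $\T \to \T'$ is fully faithful (both sit as full subcategories of $\arrows{\C}$) and essentially surjective, hence an equivalence of categories over $\C$; one checks as above that $\T'$ inherits the fibration and cartesian comprehension, since cartesian lifts are pullback squares and pullbacks are iso-invariant and of the form $\chi(\reindex{f}{A})$. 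Thus the comparison underlies an equivalence in $\CC$. Because the inclusions are 2-fully-faithful and, by these constructions, essentially surjective, they are equivalences of 2-categories, and the reflectors furnish the required adjoint quasi-inverses.

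The main obstacle I anticipate is precisely the verification, for the fullification, that $\widetilde{\T}$ is a fibration whose $\widetilde{p}$-cartesian morphisms are exactly the pullback squares of $\arrows{\C}$; once this is in place, the universal property of the unit and the essential-surjectivity arguments for the two equivalences are largely formal, reducing to full faithfulness of comprehension and iso-invariance of pullbacks. I would also take care to phrase all three universal properties 2-categorically, including the action on transformations, but expect no difficulty there, since the unit base functors are identities and the target comprehensions are fully faithful, which rigidifies the choices of lifted 1- and 2-cells.
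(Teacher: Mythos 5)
Your proposal is correct and takes essentially the same route as the paper: the same three reflectors (fullification via the bijective-on-objects/fully-faithful factorisation of $\chi$, image factorisation for subcategoricalisation, and repletion), with the two inner inclusions recognised as equivalences because the unit maps are equivalences in $\CC$. The only difference is presentational: where you verify by hand that the factorised $\widetilde{\T}$ is a fibration whose cartesian morphisms are exactly the pullback squares and then check the universal property directly, the paper obtains the strict 2-adjunction formally from the fact that bijective-on-objects and fully faithful functors form an orthogonal factorisation system on $\Cat$ (and it records the hom-category isomorphisms/equivalences explicitly, slightly more than your appeal to quasi-inverses of biequivalences).
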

  
\begin{proof}
  Consider the right-most inclusion $\CC_{\full} \injto \CC$. Its left “fullification” adjoint sends a comprehension category $(\C,\T,p,\chi)$ to $(\C,\T_\chi, p',\chi')$ where $\T \to \T_\chi \to[\chi'] \arrows{\C}$ is the factorisation of $\chi$ as identity-on-objects followed by fully faithful; concretely $\T_\chi$ has the objects of $\T$, but arrows induced by $\chi$ from $\arrows{\C}$.
  Isomorphisms of hom-categories making this a (strict 2-)adjunction follow formally from the fact that the collections of bijective-on-objects functors and fully faithful functors form an orthogonal factorisation system on $\Cat$ \cite[\textsection 4(a)]{lucatelli-nunes-sousa}. Thus we have
    \begin{multline*} 
      \CC_{\full} ((\C_1,(\T_1)_\chi,p_1',\chi_1'), (\C_2,\T_2,p_2,\chi_2))
      \\
      \iso \CC ((\C_1, \T_1, p_1, \chi_1), (\C_2,\T_2,p_2,\chi_2)).
    \end{multline*}

    The middle inclusion $\CC_{\subcat} \injto \CC_{\full}$ has a left adjoint given by factoring a fully faithful comprehension functor $\chi : \T \to \arrows{\C}$ by its image subcategory $\im \chi$; this again gives a strict 2-adjunction, but now moreover a biequivalence, since the unit map $(\C, \T, p, \chi) \to (\C,\im \chi, \iota, \cod)$ is an equivalence in $\CC$.

   Finally, $\CC_{\subcat} \injto \CC_{\replete}$ has a left adjoint sending a subcategory inclusion $\T \injto \arrows{\C}$ to its repletion $\repletion \T \injto \C$.
   Again, the unit maps are equivalences, and we have an isomorphism of hom-categories giving a strict 2-adjunction:
   \begin{multline*} 
      \CC_{\replete} ((\C_1, \repletion \T_1 ,p_1',\chi_1'), (\C_2,\T_2,p_2,\chi_2)) 
      \\
      \equiv \CC_\subcat ((\C_1, \T_1, p_1, \chi_1), (\C_2,\T_2,p_2,\chi_2)).
  \end{multline*}
\end{proof}

\subsubsection{Structured display map categories}\label{sec:sdmc}

The repleteness condition on display maps is occasionally dropped.
The resulting notion is relatively little-used and seems to enjoy few advantages,
perhaps because (as we argue below) their natural maps are “wrong”.

\begin{definition}[{Taylor \cite[Def.~8.3.2]{taylor:practical-foundations}}] \label{def:sdmc}
  A \emph{display structure} on a category $\C$ is a class of maps $\DM \subseteq  \mor (\C)$, again called \emph{display maps}, such that display maps admit all pullbacks as in \Cref{diagram: pb} above.
  A \emph{structured display map category} (sDMC) is a category equipped with a display structure.
\end{definition}

\begin{remark}
  Taylor \cite[Def.~8.3.2]{taylor:practical-foundations} couples repleteness with the question of chosen pullbacks versus existence.
  The latter point matters mainly under a more fine-grained constructive analysis than we aim for; see also \Cref{rem:sdmc-cloven-maps} below.
\end{remark}

\begin{example}
  The category of sets with subset inclusions as display maps forms a sDMC.
\end{example}

The obvious notion of maps is the same as for DMC’s.

\begin{definition}
  Let $\sDMC$ denote the 2-category whose objects are structured display map categories, 1-cells are functors preserving display maps and pullbacks of display maps, and 2-cells are natural transformations.
\end{definition}

Like DMC’s, sDMC’s may be regarded as certain comprehension categories.

\begin{theorem}
\label{thm: iso between sDMC and full comp cats w subcategory inclusion}
    There is an isomorphism
    $ \sDMC \iso \CCs_{\subcat} $
    where the latter 2-category consists of full comprehension categories where $\chi$ is a subcategory inclusion, and with strict maps as 1-cells.
\end{theorem}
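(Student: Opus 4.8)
The plan is to mimic the proof of \Cref{thm: iso between DMC and comp cat variant}, dropping repleteness throughout; the absence of repleteness is exactly what costs us the accompanying equivalence with the pseudo-map 2-category and leaves only the strict isomorphism. On objects, the construction of \Cref{lem:dmc to comp cat} used only stability under pullback, not repleteness, so it applies verbatim: a structured display map category $(\C,\DM)$ is sent to $(\C,\DM,\cod\circ\iota,\iota)$, where $\iota$ is the inclusion of the full subcategory of $\arrows{\C}$ on the objects $\DM \subseteq \mor(\C) = \ob(\arrows{\C})$. By construction $\iota$ is a full subcategory inclusion, so this lands in $\CCs_{\subcat}$. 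Conversely a comprehension category in $\CCs_{\subcat}$ has $\T$ literally a full subcategory of $\arrows{\C}$ with $\chi$ its inclusion, i.e.\ a subclass $\DM \subseteq \mor(\C)$; cartesianness of $\chi$ together with the fibration structure of $p = \cod\circ\chi$ says exactly that $\DM$ is closed under the pullbacks of \Cref{diagram: pb}. These two assignments are mutually inverse on the nose (since $\iota(d) = d$ and the full subcategory on $\ob\T$ is $\T$), giving a bijection on objects.

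First I would check the 1-cells. Fix objects $(\C,\DM)$, $(\C',\DM')$ and their associated comprehension categories. Because $\chi = \iota$, $\chi' = \iota'$ are literal inclusions, the strictness equation $\chi'\bar F = \arrows{F}\chi$ of a strict map $(F,\bar F,\mathrm{id})$ forces $\bar F$ to be the restriction $\arrows{F}|_{\DM}$; this restriction exists as a functor into $\T'$ precisely when $F$ carries display maps to display maps, and $\bar F$ preserves cartesian maps precisely when $F$ preserves the pullback squares of \Cref{diagram: pb}. Thus strict maps of $\CCs_{\subcat}$ correspond bijectively, via $(F,\bar F,\mathrm{id}) \mapsto F$, to functors preserving display maps and their pullbacks, i.e.\ to 1-cells of $\sDMC$.

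Finally, for 2-cells: a transformation $(\alpha,\bar\alpha)$ has $\bar\alpha$ lying over $\alpha$ with $\chi'(\bar\alpha_A)$ pinned down by the compatibility equation; since $\chi'$ is faithful, $\bar\alpha$ is uniquely determined by $\alpha$, and since $\chi'$ is moreover full, the required $\bar\alpha_A$ always exists as a morphism of $\T'$. Hence every natural transformation $\alpha$ of $\sDMC$-maps lifts uniquely, giving a bijection on 2-cells. Assembling the three bijections — and noting they are manifestly compatible with all composites and identities — yields the claimed isomorphism $\sDMC \iso \CCs_{\subcat}$ of 2-categories. I expect the only real subtleties to be the object-level equivalence ``stable under pullback $\Leftrightarrow$ $\cod\circ\chi$ a fibration with $\chi$ cartesian'' and the forcing of $\bar F$ on 1-cells; everything else is the bookkeeping of literal inclusions, which is precisely what makes the comparison a strict isomorphism rather than a mere equivalence.
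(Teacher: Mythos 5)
Your proposal is correct and takes exactly the route the paper intends: the paper states this theorem without an explicit proof, treating it as the repleteness-free restriction of the argument for \cref{thm: iso between DMC and comp cat variant}, and your object/1-cell/2-cell analysis (pullback-stability $\Leftrightarrow$ fibration-plus-cartesian-$\chi$ on objects, $\bar F$ forced to be $\arrows{F}|_{\DM}$ on 1-cells, fullness giving existence and faithfulness uniqueness of lifted 2-cells) fills in precisely that argument. You also correctly note the one point needing care — that ``subcategory inclusion'' must be read as a literal inclusion for the correspondence to be an on-the-nose isomorphism of 2-categories rather than an equivalence — which is consistent with the theorem's phrasing.
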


In contrast to \cref{thm: iso between DMC and comp cat variant}, pseudo and strict maps do not agree for sDMC’s.

\begin{example}
Take $\C$ to be the full subcategory of $\FinSet$ on $\{0,1,2\}$, with injections as display maps.
With any choice of pullbacks, this gives an sDMC.

Take $\C'$ to be similar but with two isomorphic copies of $1$, so with objects $\{0,1,1',2\}$.
As displays, take all injections, except for maps $1 \to 2$,
where we make the left point $l : 1 \to 2$ and the right point $r' : 1' \to 2$ display maps,
but \emph{not} $r : 1 \to 2$ or $l' : 1' \to 2$.
Pullbacks for a display structure can still be chosen: whenever a pullback yields a point-inclusion into $2$, either $l$ or $r'$ will suffice.

$\C$ and $\C'$ have equivalent repletions, so are equivalent via pseudo maps.
However, no equivalence $F : \C \to \C'$ can strictly preserve display maps,
since $Fl, Fr : F1 \to F2$ would give distinct parallel display maps from either $1$ or $1'$ to $2$.
So not every pseudo map $\C \to \C'$ is isomorphic to a strict one.
\end{example}

\begin{remark} \label{rem:sdmc-cloven-maps}
  If we take sDMC’s to include chosen pullbacks, and additionally require maps of sDMC’s to preserve these on the nose (the definition of \emph{interpretations} in Taylor \cite[Def.~8.3.2]{taylor:practical-foundations} is unclear on this point), then these correspond to maps of \emph{cloven} comprehension categories strictly preserving the cleaving, and so diverge even further from the pseudo maps.
\end{remark}

\begin{theorem}
\label{thm: adj between dmc and sdmc}
    The inclusion $\DMC \injto \sDMC$, or equivalently $\CCs_{\replete} \allowbreak \injto \CCs_{\subcat}$, has a left adjoint.
\end{theorem}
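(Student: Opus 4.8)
The plan is to exhibit the left adjoint as the repletion operation, exactly as in the final step of \Cref{thm:fullification} but now for strict maps. I would work on the $\sDMC$ side (transporting across the isomorphisms $\sDMC \iso \CCs_{\subcat}$ and $\DMC \iso \CCs_{\replete}$ of \Cref{thm: iso between sDMC and full comp cats w subcategory inclusion} and \Cref{thm: iso between DMC and comp cat variant}): send a structured display map category $(\C,\DM)$ to the display map category $(\C,\repletion\DM)$, where $\repletion\DM \subseteq \mor(\C)$ is the closure of $\DM$ under isomorphism in $\arrows{\C}$, leaving the base category $\C$ untouched.

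First I would check that $(\C,\repletion\DM)$ is genuinely a DMC: repleteness holds by construction, and pullback-stability because any pullback of a map $g$ with $g \iso d$, $d \in \DM$, is isomorphic to a pullback of $d$, which lies in $\DM \subseteq \repletion\DM$. The unit of the adjunction is then the identity functor on $\C$, regarded as a map $(\C,\DM) \to (\C,\repletion\DM)$; it preserves display maps (since $\DM \subseteq \repletion\DM$) and their pullbacks, hence is a strict map of sDMC's.

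The universal property is the crux. Given any DMC $(\C',\DM')$ and a strict sDMC-map $F : (\C,\DM) \to (\C',\DM')$, I claim $F$ automatically preserves the repletion: if $g \iso d$ with $d \in \DM$ then $Fg \iso Fd \in \DM'$, and since $\DM'$ is replete this forces $Fg \in \DM'$. Thus $F$ carries $\repletion\DM$ into $\DM'$, and since a functor preserves pullback squares up to the isomorphisms witnessing $g \iso d$, it also preserves pullbacks of maps in $\repletion\DM$ (a square isomorphic to a pullback being again a pullback). Hence $F$ factors through the unit as a map $(\C,\repletion\DM) \to (\C',\DM')$ whose underlying functor on bases is again $F$, so the factorisation is unique and given by literally the same data.

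Finally, since 2-cells on both sides are just natural transformations between the underlying functors, this bijective correspondence of 1-cells extends verbatim to an isomorphism of hom-categories
\[
  \DMC\bigl((\C,\repletion\DM),(\C',\DM')\bigr) \;\iso\; \sDMC\bigl((\C,\DM),(\C',\DM')\bigr),
\]
natural in both arguments, exhibiting repletion as a left adjoint (and indeed a reflection, the inclusion being 2-fully-faithful). The only points needing real care are the two repletion-stability claims — that $\repletion\DM$ is pullback-stable, and that a map into a replete target preserves the repletion — both of which reduce to isomorphism-invariance of pullback squares; everything else is formal.
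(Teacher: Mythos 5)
Your proof is correct and takes essentially the same approach as the paper: the paper's own proof is a one-line appeal to repletion ``as in \cref{thm: dmc adjunction}'', and your argument simply spells out that repletion construction on the $\sDMC$ side, verifying pullback-stability of $\repletion\DM$ and that a strict map into a replete target automatically preserves the repletion. The extra care you take is exactly what the paper leaves implicit, including the (correct) observation that here one gets a genuine isomorphism of hom-categories, i.e.\ a strict 2-adjunction with 2-fully-faithful inclusion --- matching the paper's intent, and appropriately \emph{not} claiming the unit is an equivalence, which would fail in this strict-maps setting.
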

\begin{proof}
  The left adjoint is given by repletion, as in \Cref{thm: dmc adjunction}.\PLL{Consider adding: here it’s a strict 2-adjoint.}
\end{proof}

\subsubsection{Rooted display map categories}
Relatively little changes when we add roots.

\begin{definition}[{\cite[Rem.~8.3.9]{taylor:practical-foundations}}]
    A (possibly structured) display map category is \emph{rooted} if $\C$ has a terminal object, and all morphisms to the terminal object are composites of display maps and isomorphisms. (In the non-structured case, repleteness renders the isomorphisms redundant.) 

    Let $\DMC_\rooted$ be the (non-1-full) sub-2-category of $\DMC$ consisting of rooted display map categories, maps preserving terminal objects, and all transformations.
\end{definition}

\begin{example}
\label{ex: wfs dmc 2}
  Weak factorisation systems, considered as display map categories following \Cref{ex: wfs dmc}, are often rooted:
  for instance, those coming from model categories with all objects fibrant, such as $\Top$ with either Serre or Hurewicz fibrations.
  When this fails, such as in the Kan model structure on simplicial sets, we may still restrict to the full subcategory of fibrant objects (Kan complexes) to recover rootedness.
\end{example}

This notion of rootedness agrees with rootedness for comprehension categories as given in \cref{def:rootedness-etc}:

\begin{theorem} \label{thm:equivs-btn-rooted-dmcs-and-compcats}
  \cref{thm: iso between DMC and comp cat variant,thm: dmc adjunction,thm: iso between sDMC and full comp cats w subcategory inclusion,thm: adj between dmc and sdmc} remain true with rootedness added,
  with one caveat: the “strict” 2-categories should take maps that are strict on comprehension categories, but not necessarily strictly rooted. \PLL{Can’t we set up definitions better to avoid this wart somehow?}
\end{theorem}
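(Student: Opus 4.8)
The plan is to observe that every construction appearing in the proofs of \cref{thm: iso between DMC and comp cat variant,thm: dmc adjunction,thm: iso between sDMC and full comp cats w subcategory inclusion,thm: adj between dmc and sdmc} leaves the underlying category $\C$ (and hence its terminal objects) untouched and modifies only the fibration of types. Since rootedness is essentially a condition on $\C$ together with its context extensions, all the relevant isomorphisms, equivalences, and adjunctions should restrict to the rooted sub-2-categories once we verify two things: that the object-level correspondences match rooted objects with rooted objects, and that the hom-category isomorphisms and units respect pointedness.

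First I would spell out the agreement of the two notions of rootedness asserted just above the statement. Viewing a (possibly structured) display map category $(\C,\DM)$ as a comprehension category and taking the terminal object of $\C$ as distinguished object, the contextual slice $\relslice{\C}{1}$ has as objects the finite composable sequences of display maps over $1$, and the functor $\relslice{\C}{1} \to \C$ sends such a sequence to its domain. Essential surjectivity of this functor then says exactly that every object is isomorphic to the domain of a composite of display maps into $1$, i.e.\ that every map to $1$ is, up to isomorphism, such a composite; this is precisely the rooted-display-map-category condition (with repleteness absorbing the isomorphisms in the unstructured case). Hence the isomorphisms $\DMC \iso \CCs_{\replete}$ and $\sDMC \iso \CCs_{\subcat}$ carry rooted objects to rooted objects and back. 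The same bookkeeping shows that between replete comprehension categories every pointed pseudo map is uniquely isomorphic to a pointed strict one, so the equivalence $\CCs_{\replete} \equiv \CC_{\replete}$ also persists in the rooted setting.

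Next I would check that the fullification, subcategorical, and repletion left adjoints of \cref{thm: dmc adjunction,thm: adj between dmc and sdmc} preserve rootedness. Each fixes $\C$ and its terminal object and alters the types only by passing to an equivalent comprehension category: the unit maps are equivalences in $\CC$ whose underlying $\C$-functor is the identity. Rootedness is invariant under equivalence of comprehension categories — if $(F,\bar{F},\varphi)$ is an equivalence and the source is rooted, then essential surjectivity of $F$ together with $F$'s preservation of terminal objects and of context extension (up to $\varphi$) makes the target rooted — so rootedness transports along these units. Because each unit is the identity on $\C$, it is in particular strictly pointed, so composing with it preserves the pointed structure on maps; the hom-category isomorphisms of \cref{thm: dmc adjunction,thm: adj between dmc and sdmc} therefore restrict to the pointed/rooted maps, and the adjunctions and biequivalences descend to the rooted sub-2-categories.

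The one genuine subtlety — the caveat in the statement — concerns the strict 2-categories, and I expect it to be the only point requiring real care. A functor preserves terminal objects only up to isomorphism, so a map of rooted display map categories (which by definition merely preserves terminal objects) corresponds to a map of comprehension categories that is strict on the comprehension structure but only \emph{pointed}, not strictly pointed, at the root. Throughout the rooted versions one must therefore take the ``strict'' 2-categories to have maps that are strict qua comprehension-category maps yet only pointed up to isomorphism, rather than the strictly pointed maps of \cref{def:rootedness-etc}. With this reading the object- and morphism-level correspondences of the previous paragraphs go through verbatim, and all four theorems hold with rootedness added.
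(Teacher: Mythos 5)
Your proposal is correct, and it matches the paper's intent: the paper in fact gives \emph{no} written proof of this theorem (it is stated as routine, with the agreement of the two notions of rootedness asserted in the sentence just before it, and the reader referred to the summary figure), and your argument supplies exactly the implicit verification — the rootedness conditions on (s)DMC's and on comprehension categories coincide under the isomorphisms of \cref{thm: iso between DMC and comp cat variant,thm: iso between sDMC and full comp cats w subcategory inclusion}, the adjunction units of \cref{thm: dmc adjunction,thm: adj between dmc and sdmc} are identities on the base $\C$ so rootedness and pointedness transport along them, and the caveat about strictness arises precisely because terminal objects are only ever preserved up to isomorphism. The one spot you gloss slightly is in the structured (non-replete) case: to see that ``every map to $1$ is a composite of display maps and isomorphisms'' coincides with essential surjectivity of $\relslice{\C}{1} \to \C$, one must rewrite interspersed isomorphisms as a single isomorphism followed by a tower of displays, which uses pullback-stability of display maps along isomorphisms — a one-line argument, but worth stating since repleteness is unavailable there.
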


We do not restate them in full here: they are summarised in \Cref{fig:summary-types-as-maps-rooted} below.

\autocapsubsection{Clans}
\label{section:clans}

\begin{definition}[{Taylor \cite[\textsection 4.3.2]{taylor:phd}}]\label{def:clan}
  A \emph{clan}\footnote{This name is due to Joyal \cite[Def.~1.1.1]{joyal-2017:notes-on-clans}; in fact these are the original \emph{classes of display maps} of Taylor \cite[\textsection 4.3.2]{taylor:phd}.} is a rooted display map category $(\C, \DM)$ where $\DM$ is closed under composition and contains all identities.
\end{definition}

\begin{example}
  Display map categories arising from WFS's as in \Cref{ex: wfs dmc,ex: wfs dmc 2} are always closed under composition and identities, so are clans whenever they are rooted, i.e.\ when all objects are fibrant.

  We write $\Clan$ for the 2-category of clans, as a full sub-2-category of $\DMC_\rooted$.
\end{example}

\begin{theorem}
\label{thm: clans are equiv to full repl sub comp CCs}
We have
\[\Clan \iso \CCs_{\rooted,\replete,\compclosed} \simeq \CC_{\rooted,\replete,\compclosed}\]
where as in \cref{thm:equivs-btn-rooted-dmcs-and-compcats}, “strict” maps preserve comprehension strictly, but not necessarily the root.
\end{theorem}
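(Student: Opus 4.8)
The plan is to obtain this result by restricting the rooted isomorphism-and-equivalence already established in \Cref{thm:equivs-btn-rooted-dmcs-and-compcats}, namely $\DMC_\rooted \iso \CCs_{\rooted,\replete} \simeq \CC_{\rooted,\replete}$, to the full sub-2-categories cut out on each side by closure under composition and identities. Since clans are by \Cref{def:clan} precisely the full sub-2-category of $\DMC_\rooted$ on those rooted display map categories $(\C,\DM)$ whose class $\DM$ is closed under composition and contains all identities, the task reduces to checking that this defining condition matches the composition-closedness condition $\compclosed$ of \Cref{def:conditions-on-compcats} across the isomorphism.

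First I would unwind the isomorphism $\DMC_\rooted \iso \CCs_{\rooted,\replete}$ on objects: it sends $(\C,\DM)$ to the comprehension category $(\C,\DM,\cod\circ\iota,\iota)$ of \Cref{lem:dmc to comp cat}, in which the fibration of types $\T$ is nothing but $\DM$ viewed as a full subcategory of $\arrows{\C}$. Under this identification $\T = \DM$, the clan requirement ``$\DM$ is closed under composition and contains identities'' is literally the requirement ``$\T$ is closed under composition and includes identities'' defining $\compclosed$. Hence the object-assignment of the isomorphism carries clans bijectively onto rooted, replete, composition-closed comprehension categories.

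Since both sides are full sub-2-categories (clans full in $\DMC_\rooted$, and the composition-closed comprehension categories full in $\CCs_{\rooted,\replete}$) and the isomorphism matches their objects, it restricts to an isomorphism $\Clan \iso \CCs_{\rooted,\replete,\compclosed}$. For the equivalence, I would restrict the equivalence $\CCs_{\rooted,\replete} \simeq \CC_{\rooted,\replete}$ likewise: composition-closedness is a condition on objects alone, so it carves out the same full sub-2-category on each side, and as the equivalence is identity-on-objects (the inclusion of strict maps among pseudo maps, which by the argument of \Cref{thm: iso between DMC and comp cat variant} remains 2-fully-faithful and essentially surjective after adding these conditions), restricting yields $\CCs_{\rooted,\replete,\compclosed} \simeq \CC_{\rooted,\replete,\compclosed}$.

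There is essentially no obstacle here beyond bookkeeping: all the real content lives in \Cref{thm:equivs-btn-rooted-dmcs-and-compcats}, and the present theorem only verifies that the extra closure condition translates verbatim---which it does because $\T$ is identified on the nose with $\DM$. The one point worth stating carefully, inherited from \Cref{thm:equivs-btn-rooted-dmcs-and-compcats}, is the caveat that ``strict'' refers to strict preservation of comprehension but not of the root; this is unaffected by intersecting with the composition-closed objects.
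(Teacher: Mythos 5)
Your proposal is correct and matches the paper's proof exactly: the paper likewise obtains the result by restricting the isomorphism and equivalence $\DMC_\rooted \iso \CCs_{\rooted,\replete} \equiv \CC_{\rooted,\replete}$ of \cref{thm:equivs-btn-rooted-dmcs-and-compcats}, calling this ``immediate.'' Your write-up merely spells out the bookkeeping (that $\T=\DM$ on the nose, so composition-closedness translates verbatim, and that fullness of the sub-2-categories allows the restriction) which the paper leaves implicit.
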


\begin{proof}
  Immediate by restriction of the equivalences of \cref{thm:equivs-btn-rooted-dmcs-and-compcats}, \[\DMC_\rooted \iso \CCs_{\rooted,\replete} \equiv \CC_{\rooted,\replete}. \qedhere \] 
\end{proof}

\begin{theorem}\label{thm:left_adjoint_to_compcl_incl}
  The inclusions \[\CC^{\str}_{(\rooted,)\replete,\compclosed} \injto \CC^{\str}_{(\rooted,)\replete}\] have a left adjoint (in four versions: rooted and unrooted, strict and unstrict);
  hence so does the inclusion $\Clan \injto \DMC_\rooted$.
\end{theorem}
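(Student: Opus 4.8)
The plan is to realise the left adjoint concretely as a \emph{composition-closure} operation on classes of display maps, and then transport it across the (iso)equivalences of \cref{thm: iso between DMC and comp cat variant,thm:equivs-btn-rooted-dmcs-and-compcats,thm: clans are equiv to full repl sub comp CCs}. Working on the display-map side, given a display map category $(\C,\DM)$ I would define $\overline{\DM} \subseteq \mor(\C)$ to be the smallest class containing $\DM$, containing all isomorphisms of $\C$, and closed under composition; concretely, its members are the finite composites of maps each lying in $\DM$ or invertible. This is visibly closed under composition and contains the identities, and it is replete: repleteness together with the presence of identities forces all isomorphisms into any such class, and a map isomorphic in $\arrows{\C}$ to a composite $d_n \oocomp \cdots \oocomp d_1$ is again such a composite after pre- and post-composing with the witnessing isomorphisms.

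The key point --- and the one genuinely requiring an argument --- is that $\overline{\DM}$ is still closed under pullback, so that $(\C,\overline{\DM})$ is again a display map category. This follows from the pasting lemma for pullbacks: to pull back a composite $d_n \oocomp \cdots \oocomp d_1$ along a map $f$ one pulls back the factors successively (each pullback existing since $\DM$, and the isomorphisms, are pullback-stable as in \cref{diagram: pb}), and the pasted rectangle is a pullback of the whole composite whose sides again lie in $\overline{\DM}$. Thus $(\C,\overline{\DM})$ is a composition-closed replete display map category, i.e.\ an object of $\CCs_{\replete,\compclosed}$ under \cref{thm: iso between DMC and comp cat variant} (and a clan when $(\C,\DM)$ is rooted).

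For the universal property, the unit at $(\C,\DM)$ is the identity functor on $\C$, viewed as a map $(\C,\DM) \to (\C,\overline{\DM})$ (it preserves display maps since $\DM \subseteq \overline{\DM}$, and preserves pullbacks). Given any composition-closed replete $(\C',\DM')$ and any map $F \colon (\C,\DM) \to (\C',\DM')$, the functor $F$ automatically preserves all of $\overline{\DM}$: being a functor, it sends a composite of display maps to the composite of their images, and these images lie in $\DM'$, which is closed under composition and replete. Likewise $F$ preserves pullbacks of $\overline{\DM}$-maps, since these are pasted from pullbacks of the factors. Hence $F$ is \emph{already} a map $(\C,\overline{\DM}) \to (\C',\DM')$, and restriction along the unit is inverse to this identification. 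Since the underlying functor is unchanged and the $2$-cells on both sides are plain natural transformations, this yields an isomorphism of hom-categories, giving a strict $2$-adjunction.

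Finally I would observe that the construction leaves $\C$, its terminal object, and the class of terminal-preserving maps untouched, and that $\DM \subseteq \overline{\DM}$ preserves rootedness; so the same functor serves in the rooted versions, giving the left adjoint to $\Clan \injto \DMC_\rooted$ via the isomorphisms $\DMC_\rooted \iso \CCs_{\rooted,\replete}$ and $\Clan \iso \CCs_{\rooted,\replete,\compclosed}$ of \cref{thm:equivs-btn-rooted-dmcs-and-compcats,thm: clans are equiv to full repl sub comp CCs}. The pseudo versions follow either by repeating the argument verbatim --- a pseudo map of replete comprehension categories is, by \cref{thm: iso between DMC and comp cat variant}, essentially a display-and-pullback-preserving functor, to which the same preservation argument applies --- or simply by transporting the strict adjunction along the equivalences $\CCs_{\replete} \equiv \CC_{\replete}$ of that theorem. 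The main obstacle is the pullback-closure of $\overline{\DM}$; everything after that is formal.
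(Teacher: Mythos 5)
Your proposal is correct and follows essentially the same route as the paper's own proof: the left adjoint is composition-closure $(\C,\DM)\mapsto(\C,\overline{\DM})$ on the display-map side, transported across the isomorphisms with $\CCs_{(\rooted,)\replete}$ and restricted to the rooted and pseudo variants. You have merely spelled out the details the paper leaves as ``straightforward to check'' --- pullback-stability of $\overline{\DM}$ via pasting, repleteness via conjugation by the witnessing isomorphisms, and the hom-category isomorphism expressing the universal property --- all of which are sound.
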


\begin{proof}
  We take first the least restrictive case, \[\CC_{\replete,\compclosed} \injto \CC_{\replete} \iso \DMC.\]
  The left adjoint sends a display map category $(\C,\DM)$ to $(\C, \overline{\DM})$, where $\overline{\DM}$ is the closure of $\DM$ under composition.
  It is straightforward to check this gives a (strict 2-)adjoint,
  and does not interact with either rootedness or strictness of maps, so restricts to give the other adjoints desired.
\end{proof}

\autocapsubsection{Finite-limit categories}

\emph{Finite limit categories} (also called \emph{left exact} or \emph{lex} categories) constitute the longest established notion we consider, predating dependent sorts, and with a literature too deep and wide to comprehensively survey.
Logically they model \emph{essentially algebraic theories}, which may be presented syntactically in several ways (see, for instance, \cite{freyd:aspects-of-topoi}, \cite[3.D]{adamek-rosicky}, \cite{palmgren-vickers}) or categorically by \emph{sketches} \cite{kelly:on-the-eat}.
They correspond under Gabriel--Ulmer duality \cite{gabriel-ulmer,adamek-hebert-rosicky:on-eats} to \emph{locally finitely presentable categories}.
Good surveys are given by Adámek and Rosický \cite{adamek-rosicky} and Johnstone \cite[D1--2]{johnstone:elephant}.

\begin{definition}
  We write $\Lex$ for the 2-category of categories with finite limits, functors preserving finite limits, and natural transformations.
\end{definition}

\begin{definition}\label{def:compcat-from-finite-limit-cat}
    A finite-limit category $\C$ determines a clan $(\C, \mor(\C))$, with all morphisms taken as display maps.
\end{definition}

Recall that a comprehension category is called \emph{trivial} if its fibration of types is precisely its codomain fibration.  

\begin{lemma}\label{lem:lex_compcat}
    $ \Lex \iso \CCs_{\rooted,\trivial} \equiv \CC_{\rooted,\trivial}$.
\end{lemma}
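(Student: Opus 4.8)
The plan is to unwind the definitions of \emph{trivial} and \emph{rooted} on the comprehension-category side, match them against finite limits level by level to obtain the strict isomorphism, and then deduce the pseudo equivalence by restricting \Cref{thm: iso between DMC and comp cat variant,thm:equivs-btn-rooted-dmcs-and-compcats}.

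\emph{Objects.} A trivial comprehension category is by definition of the form $(\C,\arrows{\C},\cod,\mathrm{id})$, and for this to be a comprehension category the functor $\cod : \arrows{\C}\to\C$ must be a fibration — which holds exactly when $\C$ has pullbacks, the $\cod$-cartesian maps being precisely the pullback squares. Under this identification a type over $\Gamma$ is simply a map into $\Gamma$ with its domain as context extension; consequently, as soon as the distinguished object $1$ is terminal, every object $X$ is the context extension of its unique map $X\to 1$, so the functor $\relslice{\C}{1}\to\C$ is automatically essentially surjective and rootedness reduces to the single demand that $\C$ have a terminal object. A category with pullbacks and a terminal object is exactly a finite-limit category, and conversely each finite-limit category yields such a comprehension category — equivalently, the one attached by \Cref{lem:dmc to comp cat} to the display map category $(\C,\mor\C)$ of \Cref{def:compcat-from-finite-limit-cat}, whose inclusion $\iota$ is the identity. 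Hence objects correspond bijectively.

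\emph{One- and two-cells.} A strict map of trivial comprehension categories forces $\bar F=\arrows F$ and $\varphi=\mathrm{id}$, so its only datum is the functor $F:\C\to\C'$; that $\bar F=\arrows F$ preserve cartesian maps says exactly that $F$ preserves pullback squares, and (using the caveat of \Cref{thm:equivs-btn-rooted-dmcs-and-compcats} that such maps need only be pointed up to isomorphism) pointedness says $F$ carries the terminal object to a terminal object. Together these are precisely the condition that $F$ preserve finite limits, matching the $1$-cells of $\Lex$. For $2$-cells, the transformation compatibility $\gamma_A\,\chi'(\bar\alpha_A)=\alpha_{\Gamma.A}\,\varphi_A$, specialised to $\chi=\chi'=\mathrm{id}$ and $\varphi=\gamma=\mathrm{id}$, collapses to $\bar\alpha=\arrows\alpha$; thus the second component $\bar\alpha$ is uniquely determined by the underlying natural transformation $\alpha:F\Rightarrow G$, every $\alpha$ arises this way, and the ``pointed'' clause on transformations is vacuous because any two maps into the terminal object agree. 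So $2$-cells on both sides are just natural transformations, and assembling the three matchings gives the isomorphism $\Lex\iso\CCs_{\rooted,\trivial}$.

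\emph{Strict versus pseudo.} Since $\mathrm{id}:\arrows{\C}\to\arrows{\C}$ is (trivially) a replete full subcategory inclusion, every trivial comprehension category is in particular replete and subcategorical; the trivial rooted objects thus span a full sub-$2$-category of $\CC_{\rooted,\replete}$ (resp.\ $\CCs_{\rooted,\replete}$) equal to $\CC_{\rooted,\trivial}$ (resp.\ $\CCs_{\rooted,\trivial}$). The equivalence $\CCs_{\rooted,\replete}\equiv\CC_{\rooted,\replete}$ of \Cref{thm: iso between DMC and comp cat variant,thm:equivs-btn-rooted-dmcs-and-compcats} is witnessed by the inclusion of strict into pseudo maps being $2$-fully-faithful with every pseudo map isomorphic to a strict one, and restricting this witness to the trivial rooted objects yields $\CCs_{\rooted,\trivial}\equiv\CC_{\rooted,\trivial}$. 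I expect the only delicate point to be exactly this restriction together with the pointedness bookkeeping — one must confirm that the strictification isomorphisms produced for replete comprehension categories stay within trivial objects and respect the root up to isomorphism — and this holds because those isomorphisms fix the underlying category and send terminal objects to terminal objects.
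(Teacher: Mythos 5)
Your proposal is correct, but it is organised differently from the paper's proof. The paper does not unwind the comprehension-category side from scratch: it observes that the assignment $\C \mapsto (\C,\mor(\C))$ of \cref{def:compcat-from-finite-limit-cat} underlies a 2-functor $\Lex \injto \Clan$, checks that this is 2-fully-faithful (preserving pullbacks and the terminal object implies preserving all finite limits --- the one genuinely new observation), and then composes with the already-established isomorphism $\Clan \iso \CCs_{\rooted,\replete,\compclosed}$ of \cref{thm: clans are equiv to full repl sub comp CCs}, identifying the image as precisely the trivial objects; the strict/pseudo equivalence restricts from the clan level, just as in your final step. What the paper's factorisation buys is brevity and reuse: all the 1-cell, 2-cell, and rooted/pointed bookkeeping is inherited wholesale from \cref{thm: iso between DMC and comp cat variant,thm:equivs-btn-rooted-dmcs-and-compcats}. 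What your direct unwinding buys is a self-contained argument that makes explicit several facts the paper leaves implicit: that $\cod : \arrows{\C} \to \C$ is a fibration exactly when $\C$ has pullbacks; that for trivial comprehension categories rootedness degenerates to mere existence of a terminal object, essential surjectivity of $\relslice{\C}{1} \to \C$ being automatic since $X = 1.(X \to 1)$; that strictness forces $\bar{F} = \arrows{F}$ and the 2-cell condition forces $\bar{\alpha} = \arrows{\alpha}$; and --- a point you rightly flag --- that the maps in $\CCs_{\rooted,\trivial}$ must be taken pointed only up to isomorphism, per the caveat of \cref{thm:equivs-btn-rooted-dmcs-and-compcats}, since with strictly pointed maps the isomorphism with $\Lex$ would fail. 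Your handling of the strict-versus-pseudo restriction (the strictification of \cref{thm: iso between DMC and comp cat variant} keeps the underlying functor and objects fixed, so it stays within the trivial rooted objects) is sound; note only that both your proof and the paper's share the same harmless gloss that objects of $\Lex$ carry no distinguished terminal object while pointed comprehension categories do.
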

\begin{proof}
  The construction of \Cref{def:compcat-from-finite-limit-cat} evidently underlies a 2-functor $\Lex \injto \Clan$;
  this is 2-fully faithful, since preserving pullbacks and the terminal object implies preserving all finite limits.
  Then composing with the isomorphism $\Clan \iso \CCs_{\rooted,\replete,\compclosed}$, the image is precisely $\CC_{\rooted,\trivial}$.
\end{proof}

\begin{theorem} \label{thm:lex-to-clan-radj}
  The inclusion $\Lex \injto \Clan$ (or, equivalently, $\CC_{\rooted,\trivial} \injto \CC_{\rooted,\replete,\compclosed} $) has a right adjoint.
\end{theorem}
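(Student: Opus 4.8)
The plan is to prove the equivalent statement that the inclusion $\Lex \injto \Clan$ has a right adjoint, and then transport along the equivalences of \Cref{lem:lex_compcat} and \Cref{thm: clans are equiv to full repl sub comp CCs}. Since the inclusion is $2$-fully-faithful (\Cref{lem:lex_compcat}), what I am really after is a coreflection: for each clan $(\C,\DM)$, the maximal lex category admitting a suitable map into it. The starting observation is a translation of the hom-sets. Writing a lex category $\mathcal E$ as the clan $(\mathcal E, \mor(\mathcal E))$, in which every morphism is display, a clan map $(\mathcal E,\mor(\mathcal E)) \to (\C,\DM)$ is exactly a functor $\mathcal E \to \C$ preserving the terminal object and all pullbacks — hence all finite limits — and sending every morphism of $\mathcal E$ into $\DM$. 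So $\Clan((\mathcal E,\mor(\mathcal E)),(\C,\DM))$ is the set of finite-limit-preserving functors $\mathcal E \to \C$ whose entire image lies in the display maps.

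Next I would pin down the candidate coreflection. In any clan every object is fibrant: identities are display and $\DM$ is replete, so isomorphisms are display, and rootedness plus composition-closure then makes each map $X \to 1$ display. Consequently binary products exist, $A\times B$ being the pullback of the display maps $A \to 1$ and $B \to 1$, with display projections. Call an object $X$ \emph{separated} if its diagonal $\Delta_X : X \to X \times X$ is a display map, and let $R(\C,\DM)$ be the subcategory whose objects are the separated objects and whose morphisms are \emph{all} display maps between them (this is closed under identities and composition). I would then show $R(\C,\DM)$ is a finite-limit category, that the inclusion into $\C$ preserves finite limits, and that it lands in $\DM$: the terminal object is separated, and products, equalizers and pullbacks of separated objects (every morphism of $R(\C,\DM)$ being display, all cospans are display cospans, so pullbacks suffice) are again separated with display projection legs — e.g. $X\times_Z Y \hookrightarrow X \times Y$ is the pullback of the display mono $\Delta_Z$.

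The technical heart, and the step I expect to be the main obstacle, is verifying that the \emph{comparison} maps into these limits are themselves display, so that the limits computed in $\C$ really are limits inside $R(\C,\DM)$. The ingredients are: (i) for separated $X$ the diagonal $\Delta_X$ is a display monomorphism, and the canonical inclusions of products and pullbacks of separated objects into ambient products are pullbacks of such diagonals, hence display monos; (ii) pullbacks of display monos are display monos; and (iii) a display mono with a section is an isomorphism. Concretely, given a cone with display legs whose apex $W$ is separated, the induced map $w\colon W \to L$ postcomposed with a display mono $L \hookrightarrow X\times Y$ is display (this uses $\Delta_W$ display to see that $\langle u,v\rangle$ is display); pulling the display mono $L \hookrightarrow X\times Y$ back along that display composite and using the section determined by $w$ exhibits $w$, up to isomorphism, as a pullback of a display map, hence display. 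This ``left-cancellation against a display mono'' argument is what makes $R(\C,\DM)$ genuinely closed under finite limits with display structure; it also shows $R(\C,\DM)$ is precisely the largest lex subcategory of $\C$ whose morphisms are all display, since any such subcategory has separated objects (its diagonals are morphisms, hence display) and display maps.

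Finally I would establish the universal property. Any finite-limit-preserving $F\colon \mathcal E \to \C$ landing in $\DM$ sends each diagonal $\Delta_d$ to a display map, and since $F$ preserves products $F\Delta_d = \Delta_{Fd}$, so every $Fd$ is separated; thus $F$ factors through $R(\C,\DM)$ via a lex functor, uniquely because the inclusion $R(\C,\DM) \hookrightarrow \C$ is injective on objects and faithful. Conversely any lex functor into $R(\C,\DM)$ composes with the inclusion (a clan map into $(\C,\DM)$, by the above) to such an $F$. Checking that this correspondence extends to transformations — again immediate from faithfulness of the inclusion — yields an isomorphism of hom-categories $\Lex(\mathcal E, R(\C,\DM)) \iso \Clan((\mathcal E,\mor(\mathcal E)),(\C,\DM))$, natural in $\mathcal E$, i.e. a strict $2$-adjunction. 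Transporting across the equivalences $\Lex \equiv \CC_{\rooted,\trivial}$ and $\Clan \equiv \CC_{\rooted,\replete,\compclosed}$ gives the desired right adjoint to $\CC_{\rooted,\trivial}\injto\CC_{\rooted,\replete,\compclosed}$.
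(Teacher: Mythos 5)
Your proposal is correct and arrives at the same coreflection as the paper: the subcategory of \emph{separated} objects, i.e.\ those whose diagonal is display. The instructive difference is how the two proofs treat the morphisms of that subcategory. The paper takes $\C_{\sep}$ to be the \emph{full} subcategory of separated objects, and disposes of all morphism-level worries with one observation: if $Y$ is separated, any $f : X \to Y$ factors as the graph $(f \times Y)^*\Delta_Y : X \fibto X \times Y$ (a pullback of the display map $\Delta_Y$) followed by $\pi_2 : X \times Y \fibto Y$ (a pullback of the display map $X \to 1$), so by composition-closure \emph{every} map into a separated object is automatically display. With that lemma, the whole ``technical heart'' of your argument evaporates: cones, mediating maps into limits, and factorisations are display for free, and only closure of separated objects under finite limits remains to check. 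You instead define $R(\C,\DM)$ with morphisms restricted to display maps, and then recover closure under limits via your left-cancellation lemma (if $m$ is a display mono and $m w$ is display, then pulling $m w$ back along $m$, using repleteness and mono-with-section-is-iso, shows $w$ is display). That lemma is valid in a clan and your verification goes through; moreover, by the graph-factorisation observation your $R(\C,\DM)$ in fact coincides with the full $\C_{\sep}$ --- all maps between separated objects are display --- though your proof never notices this, which is why it is longer. The hom-category translation, the universal property (including that clan maps out of trivial clans preserve finite limits, hence send diagonals to diagonals up to isomorphism), and the transport along \cref{lem:lex_compcat} and \cref{thm: clans are equiv to full repl sub comp CCs} all match the paper, which likewise notes that the 2-dimensional data of the adjunction follows formally.
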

\begin{proof}
    The right adjoint sends a clan $(\C,\DM)$ to the full category $\C_{\sep} \subseteq \C$ of objects whose diagonal is a display map (called \emph{separated} objects).

    All maps in $\C_{\sep}$ are display in $\C$ (if $Y$ is separated, any $f : X \to Y$ is the composite of $(f \times Y)^*\Delta_Y : X \fibto X \times Y$ and $\pi_2 : X \times Y \fibto Y$), and finite products and equalisers in $\C_{\sep}$  are direct to construct; so $\C_{\sep}$ is lex and the inclusion $(\C_{\sep},\arrows{\C_{\sep}}) \to (\C,\D)$ is a map of clans. Any other map from a trivial clan to $(\C,\D)$ factors uniquely through $\C_{\sep}$.
  The higher-dimensional parts of the adjunction follow formally.
\end{proof} 

\autocapsection{Frameworks with types as primitive}\label{sec:types-as-primitive}

We turn our attention now to frameworks in which types are not merely certain maps, but a primitive notion.
Compared to the models of \cref{sec:types-as-maps}, those of this section reflect the syntax of type theory more precisely, but are correspondingly further from the natural organisation of more “mathematical” models.

The main group consists of several very closely related notions, essentially reformulations of each other with slightly different emphasis and permitting different generalisations:
categories with attributes \cite{cartmell:thesis,moggi:program-modules},
(split) type-categories \cite{pitts:categorial-logic,garner-van-den-berg:top-and-simp-models}, 
categories with families \cite{dybjer:internal-type-theory},
and natural models \cite{awodey:natural-models}.%
\footnote{As the Swedish saying goes, \emph{kärt barn har många namn}.}
These models may be (and have been) viewed either as \emph{discrete} or as \emph{full split} comprehension categories.

Finally, we reach a venerable and authoritative notion: the \emph{contextual categories} of Cartmell \cite{cartmell:thesis}.
This too has enjoyed several later reformulations as C-systems \cite{voevodsky:c-system-of-a-module} and B-systems \cite{voevodsky:b-systems,ahrens-emmenegger-north-rijke:b-c-systems}.

\autocapsubsection{Categories with families, and equivalents}

We first consider categories with attributes, since they make the comparison with comprehension categories most straightforward.

\begin{definition}[{Cartmell \cite[\textsection 3.2]{cartmell:thesis}\footnote{The original CwA’s of Cartmell \cite[\textsection 3.2]{cartmell:thesis} also included further structure corresponding to type-constructors.  This was stripped down to the present definition by Pitts \cite[Def.~6.9]{pitts:categorial-logic} (there called \emph{type-categories}) and Moggi \cite[Def.~6.2]{moggi:program-modules}, and most subsequent literature has followed suit.}, Moggi \cite[Def.~6.2]{moggi:program-modules}}]
    A \emph{category with attributes (CwA)} consists of a category $\C$; a presheaf $\Ty : \opposite{\C} \to \Set$;
    a functor $(\compext{-}{-}) : \int_\C \Ty \to \C$; and a natural transformation $p : (\compext{-}{-}) \to \pi_1$,
    cartesian in that its naturality squares are pullbacks.
         \[ 
           \begin{tikzcd}[sep=scriptsize]
                \compext{\Gamma'}{\reindex{f}{A}} \ar[r, "f.A"] \ar[d, "p_{\reindex{f}{A}}"'] \ar[dr,drpb]
                &
                \compext{\Gamma}{A} \ar[d, "p_{A}"]
                \\
                \Gamma' \ar[r, "f"]
                & 
                \Gamma                          
           \end{tikzcd}
         \]

  A \emph{(strict) map} of CwA’s is a homomorphism of them considered as essentially algebraic structures in the evident way, or, equivalently, a functor $F : \C \to \C'$ and natural transformation $\bar{F} : \Ty \to \Ty'\cdot F$, commuting on the nose with $(\compext{-}{-})$ and $p$.

  A \emph{pseudo map} consists of $F$ and $\bar{F}$ as in a homomorphism and a natural isomorphism $\varphi : F \compext{-}{-} \iso \compext{F-}{\bar{F}-}$, commuting with $p$ in that $p_FA \varphi_{\Gamma,A} = F p_A$ for all $\Gamma$, $A$.
  A \emph{transformation} of pseudo maps $\alpha : (F,\bar{F},\varphi) \to (G,\bar{G},\psi)$ is a natural transformation $\alpha : F \to G$ such that for each $\Gamma$, $A$, $(\alpha_\Gamma)^*(\bar{G}A) = \bar{F}A$ and $ \alpha_{G\compext{\Gamma}{A}} \psi{\Gamma,A} = \varphi_{\Gamma,A} \compext{\alpha_\Gamma}{\bar{F}A}$. 

  We write $\CwA^{\strone}$ for the 1-category of CwA’s with strict maps, and $\CwA^{\pseudo}$ for their 2-category with pseudo maps and transformations.
\end{definition}

Most literature considers just the 1-category of strict maps; we know no source presenting pseudo maps for CwA’s, though they must be intended in for instance the “suitable 2-category” of \cite[Rem.~2.2.2]{garner-van-den-berg:top-and-simp-models}.
\PLL{Try to recall Voevodsky’s example with showing the category of CwA’s with strict maps was badly behaved!}

It is clear, as noted from the beginning by Jacobs \cite[Ex.~4.10]{jacobs:comprehension-categories}, that categories with attributes simply ``are'' discrete comprehension categories; precisely, we have the following.
\begin{proposition}\label{prop:cwf-ccdisc}
  We have
  \begin{align*}
    \CwA^\strone & \equiv \CompCat_\discrete^\strone, \\
    \CwA^\ps & \equiv \CompCat_\discrete^\ps.
  \end{align*}
\end{proposition}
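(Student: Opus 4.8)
The plan is to reduce everything to the classical equivalence between discrete fibrations over a fixed base and presheaves on it. Sending a discrete fibration $p : \T \to \C$ to its fibre presheaf $\Gamma \mapsto \T_\Gamma$, and a presheaf $\Ty$ to the projection $\int_\C \Ty \to \C$ out of its category of elements, sets up an equivalence $\mathrm{DFib}(\C) \equiv [\opposite{\C},\Set]$ that is moreover natural in $\C$: a functor $\bar F : \T \to \T'$ lying over a given $F : \C \to \C'$ corresponds exactly to a natural transformation $\bar F : \Ty \to \Ty'\cdot F$. The only data in a discrete comprehension category not captured by the discrete fibration is the comprehension $\chi$, and I would match it directly with the remaining CwA operations: on objects $\chi(A) : \compext{\Gamma}{A} \to \Gamma$ gives both the functor $(\compext{-}{-}) = \dom\circ\chi$ and the projection natural transformation of the CwA, while on a morphism $(f,A)$ of $\int_\C\Ty$ the functor $\chi$ returns exactly the corresponding naturality square.

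The key simplification is that in a discrete fibration every morphism is $p$-cartesian. Hence condition~(4) of \Cref{def:compcat} --- that $\chi$ send cartesian maps to pullbacks --- reduces to the requirement that all these naturality squares be pullbacks, which is precisely the cartesianness clause in the definition of a CwA; and the cartesian-preservation clause in the definition of a comprehension-category map becomes vacuous, since in a discrete fibration every morphism of the target is already cartesian. With these identifications the two constructions are visibly mutually inverse up to isomorphism. They are not inverse on the nose --- objects of $\int_\C\Ty$ are pairs $(\Gamma,A)$ whereas those of a general total category $\T$ are bare --- which is exactly why one lands at an equivalence $\equiv$ and not an isomorphism $\iso$.

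It remains to check compatibility with maps and $2$-cells. For strict maps, unwinding the equation $\chi'\bar F = \arrows{F}\chi$ under the above translation yields precisely strict commutation of $\bar F$ with $(\compext{-}{-})$ and with the projection, giving the equivalence of $1$-categories $\CwA^\strone \equiv \CompCat_\discrete^\strone$. For pseudo maps, the isomorphism $\varphi : \chi'\bar F \iso \arrows{F}\chi$ over $\mathrm{id}_F$ translates object-wise into the CwA isomorphism $\varphi : F(\compext{-}{-}) \iso \compext{F-}{\bar F-}$ compatible with the projection; and a comprehension-category transformation $(\alpha,\bar\alpha)$ has its second component $\bar\alpha$ forced, by discreteness of $p'$, to be the unique lift of $\alpha$, which exists exactly when $\reindex{(\alpha_\Gamma)}{(\bar G A)} = \bar F A$ --- the CwA transformation condition --- with the remaining coherence equation matching on each side. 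This gives the $2$-equivalence $\CwA^\ps \equiv \CompCat_\discrete^\ps$. I expect no conceptual obstacle: the real content is the discrete-fibration/presheaf correspondence, and the rest is bookkeeping, of which the fiddliest part is aligning the pseudo-map isomorphisms and the transformation coherence condition without index slips.
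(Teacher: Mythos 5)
Your proposal is correct and takes essentially the same approach as the paper, whose proof simply reduces the statement to the classical equivalence between presheaves and discrete fibrations (citing Blanco for the 1-categorical version and noting the 2-categorical version is similarly direct). You have merely written out the bookkeeping the paper leaves implicit --- that every map in a discrete fibration is cartesian (making condition~(4) match the CwA pullback condition and the cartesian-preservation clause vacuous), and that discreteness forces $\bar\alpha$ to be the unique lift of $\alpha$, existing exactly under the CwA transformation condition.
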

\begin{proof}
This comes down to the classical equivalence between presheaves and discrete fibrations.
The 1-categorical version is presented in Blanco \cite[Thm.~2.3]{blanco:relating-categorical-approaches}; the 2-categorical version is similarly direct.
\end{proof}

They may be alternatively viewed as full split comprehension categories.

\begin{proposition}
  $\CwA^\strone \equiv \CompCat_{\full,\split}^{\strone,\split}$ and $\CwA^\pseudo \equiv \CompCat_{\full,\split}^{\pseudo,\split}$.
\end{proposition}

Note that even in the pseudo version we restrict to split maps, i.e.\ strictly preserving chosen lifts.

\begin{proof}
  Both equivalences are direct, using fullification in one direction (as in \cref{thm:fullification}) and taking the discrete core of a split fibration in the other.
The 1-categorical equivalence is presented by Blanco \cite[Thm.~2.4]{blanco:relating-categorical-approaches}.
\end{proof}

CwA’s were reformulated by Dybjer to make \emph{terms}, a core component of the syntax of type theory, equally primitive in the semantics.

\begin{definition}[{Dybjer \cite[Def.~1]{dybjer:internal-type-theory}}]
    A \emph{category with families (CwF)} consists of a category $\C$; a presheaf $\Ty$ on $\C$; a presheaf $\Tm$ on $\int_\C \Ty$; and 
    for each $\Gamma \in \C$ and $A \in \Ty(\Gamma)$, an object $\Gamma.A$ and map $p_A : \Gamma . A \to \Gamma$ \emph{representing} $\Tm(A,\Gamma)$ in the sense of a certain universal property.

    A \emph{strict map} of CwF’s consists of a functor and suitable natural transformations on $\Ty$ and $\Tm$, preserving the chosen extensions $\Gamma.A$, $p_A$ on the nose.  These are the only maps considered by Dybjer \cite{dybjer:internal-type-theory} and most literature; we denote their 1-category by $\CwF^\strone$.
    
    A \emph{weak map}\footnote{We would call these pseudo, but it would clash with both \cite[Def.~14]{birkedal-et-al:modal-dtt-and-dras} and \cite[Def.~9]{clairambault-dybjer:biequivalence}.} of CwF’s \cite[Def.~14]{birkedal-et-al:modal-dtt-and-dras} consists of the same data, but preserving context extensions in the weaker sense that their images satisfy the same universal property in the target CwF.  With a suitable notion of transformation, we denote their 2-category $\CwF^\wk$.

    A \emph{pseudo map} of CwF’s \cite[Def.~9]{clairambault-dybjer:biequivalence} is weaker still, preserving reindexing of types and terms only up to coherent isomorphism.  With transformations as defined there, we denote the 2-category of these by $\CwF^\ps$.
\end{definition}

\begin{proposition}\label{prop:cwf-cwa} We have
\begin{enumerate}
  \item $\CwF^\strone \equiv \CwA^\strone$;
  \item $\CwF^\wk \equiv \CwA^\ps$;
  \item $\CwF^\ps \equiv \CompCat_{\full,\split}^{\ps}$.  (Note that we use split comprehension categories here, but do not restrict to split maps.)
\end{enumerate}
\end{proposition}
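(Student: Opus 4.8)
The crux of all three parts is that the term-presheaf $\Tm$ of a category with families is redundant data, reconstructible from the underlying category with attributes; the plan is to set up this object-level correspondence once and then match the three grades of maps in turn.

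I would begin by constructing mutually inverse passages between the underlying structures. Sending a CwF to the CwA with the same $\C$, $\Ty$, context extension $\compext{-}{-}$, and projection $p$ --- simply forgetting $\Tm$ --- yields a genuine CwA, since the representability universal property of $\compext{\Gamma}{A}$ is exactly the assertion that the naturality squares of $p$ are pullbacks, i.e.\ that $p$ is cartesian. Conversely, a CwA determines a CwF by taking $\Tm(\Gamma,A)$ to be the set of sections of $p_A$, which is functorial in $(\Gamma,A)\in\int_\C\Ty$ via the chosen pullbacks and whose representability is tautological. These assignments are mutually inverse up to canonical isomorphism --- on the nose from CwA to CwF and back, and up to the canonical comparison isomorphism the other way, since $\Tm$ is pinned down only up to isomorphism by its universal property. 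This provides the object part of all three equivalences simultaneously.

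For part (1), a strict CwF map preserves $\Ty$, $\Tm$, and the chosen $(\compext{\Gamma}{A},p_A)$ strictly; as $\Tm$ and its reindexing are recovered from the CwA data, such a map carries exactly the information of a strict CwA map, and conversely a strict CwA map acts uniquely on the reconstructed terms. For part (2), a weak CwF map carries the same $(F,\bar F)$ but only requires $F(\compext{\Gamma}{A})$ to enjoy the universal property of $\compext{F\Gamma}{\bar F A}$; the resulting comparison isomorphism is precisely the natural isomorphism $\varphi$ of a pseudo CwA map, and the compatibilities with $p$ and the transformation axioms transcribe directly. Hence $\CwF^\wk \equiv \CwA^\ps$.

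For part (3), I would combine part (2) with the equivalence $\CwA^\ps \equiv \CompCat_{\full,\split}^{\ps,\split}$ established above to obtain $\CwF^\wk \equiv \CompCat_{\full,\split}^{\ps,\split}$, and then identify the further passage from weak to pseudo CwF maps with the dropping of the split-map condition. The point is that in a weak CwF map the action $\bar F$ on types commutes strictly with reindexing (it is a morphism of presheaves $\Ty\to\Ty'\cdot F$), whereas a pseudo CwF map preserves reindexing only up to coherent isomorphism. Under the full-split translation, strict reindexing is exactly the chosen splitting of the type fibration, so strict-reindexing preservation is precisely splitness of the induced functor on types, while coherent-isomorphism preservation is precisely a cartesian functor between the fibrations that need not respect the splitting --- that is, a general pseudo map of comprehension categories. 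The main obstacle, and the only step requiring real care, is checking that the coherence laws imposed on the reindexing isomorphisms in the Clairambault--Dybjer definition correspond exactly to the functoriality and cartesianness of $\bar F$ on the total category of the type fibration, and that their transformations match transformations of comprehension-category pseudo maps; granting this bookkeeping, part (3) follows.
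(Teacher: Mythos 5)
Your proposal is correct and follows essentially the same route as the paper, which simply cites Hofmann's CwF--CwA comparison (terms as sections of $p_A$, with $\Tm$ redundant up to canonical isomorphism) and declares the extension to the three grades of maps routine; your text is an expanded version of exactly that argument, including the observation for part (3) that strict preservation of reindexing corresponds to splitness of maps under the full-split translation. The bookkeeping you flag (coherence of the Clairambault--Dybjer isomorphisms, and that the term action is forced by its value on the generic term) is precisely what the paper's ``routine'' elides.
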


\begin{proof}
  The core comparison between CwF’s and CwA’s is given in by Hofmann in \cite[\textsection 3.2]{hofmann:syntax-and-semantics} (and formalised by Ahrens, Lumsdaine, and Voevodsky \cite{ahrens-lumsdaine-voevodsky}); checking this extends to the claimed equivalences is routine.
\end{proof}

Natural models \cite{fiore:cwf-repmap-slides,awodey:natural-models} are a further reformulation of categories with families, especially fruitful in paving the way for the massive generalisation by Uemura \cite{uemura:general-framework}.

\begin{definition}
    A \defemph{natural model} consists of a category $\C$, and a pair of objects in $\hat{\C}$ connected by a map $p : \Tm \to \Ty$, which is \defemph{representable} in that the pullback of any representable along it is a representable, and \emph{structured} if it is equipped with a choice of such pullbacks.

    A \defemph{pseudo map} of these is a functor $F : \C \to \C'$ and a commutative square from $p$ to $F^*p'$ in $\hat{\C}$ such that $F$ sends the representing pullbacks of $p$ to representing pullbacks of $F$; a transformation of these is a natural transformation $\alpha : F \to F'$ commuting with the given squares to $F^*p'$, $G^*p'$.  A map of structured natural models is \emph{strict} if it preserves the chosen representations on the nose.

    We write $\NatMod^\ps$ for the 2-category of natural models with pseudo maps and transformations, and $\NatMod^\strone$ for the 1-category of structured natural models and strict maps.
\end{definition}

These maps are defined by Newstead \cite[\textsection 2.3]{newstead:phd} (with the strict as default); given these, it is direct that the comparisons between natural models and CwFs given by Awodey \cite[Prop.~2]{awodey:natural-models} extend to the following equivalences.

\begin{proposition}\label{prop:nat_mod-cwf}
  We have
  \begin{align*}
    \NatMod^\strone &\equiv \CwF^\strone, \\
    \NatMod^\ps &\equiv \CwF^\wk.
  \end{align*}
\end{proposition}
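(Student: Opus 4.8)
The plan is to upgrade Awodey's object-level dictionary \cite[Prop.~2]{awodey:natural-models} between natural models and categories with families to the level of 1- and 2-cells, checking that each layer of structure and each strictness condition transposes to its counterpart. First I would recall why the objects coincide. Given a CwF $(\C,\Ty,\Tm,\dots)$, the presheaf $\Tm$ on $\int_\C \Ty$ transposes, under the standard equivalence between presheaves on a category of elements and the slice $\hat{\C}/\Ty$, to a presheaf $\Tm$ on $\C$ together with a map $p : \Tm \to \Ty$ in $\hat{\C}$; concretely $\Tm(\Gamma) = \sum_{A \in \Ty(\Gamma)} \Tm(\Gamma, A)$ with $p$ the evident projection. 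Under this transposition, the CwF's representing context extension $(\Gamma.A, p_A)$ for a type $A \in \Ty(\Gamma)$ --- equivalently $A : \yon\Gamma \to \Ty$ --- is precisely a representing object for the pullback of $p$ along $A$, by the Yoneda lemma, and this pullback is exactly the representability condition of a natural model. Thus a structured natural model and a CwF carry literally the same data, with the chosen pullbacks of the former matching the chosen extensions of the latter.

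Next I would extend this to maps. A strict CwF map is a functor $F$ with natural transformations over $\Ty$ and $\Tm$ respecting the extensions on the nose; transposing the $\Tm$-component turns the pair of natural transformations into precisely a commutative square from $p$ to $F^* p'$ in $\hat{\C}$, and ``preserves the chosen extensions on the nose'' becomes ``preserves the chosen representations on the nose'', giving the equivalence (indeed isomorphism) $\CwF^\strone \equiv \NatMod^\strone$. For the loose versions, the key point is that in a natural-model map the presheaf-level transformations $\Tm \to F^*\Tm'$ and $\Ty \to F^*\Ty'$ commute strictly, so reindexing of types and terms is preserved on the nose; the only slack is in which representing pullbacks $F$ is asked to send to representing pullbacks. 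This matches exactly the \emph{weak} CwF condition --- that the image of each chosen extension again satisfies the target's universal property --- rather than the fully pseudo one, yielding $\NatMod^\ps \equiv \CwF^\wk$. The transformations on both sides are natural transformations $\alpha : F \to G$ subject to the evident compatibility with the given squares, and the dictionary identifies these directly.

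The only real work is this bookkeeping of strictness, and the one point worth flagging explicitly is the slightly surprising matching of pseudo natural-model maps with \emph{weak} (not pseudo) CwF maps. This is forced: since the presheaf-level data of a natural-model map is strictly natural, all reindexing is preserved strictly and only context extension is loosened, which is exactly the defining feature of weak CwF maps. Given Awodey's object-level result, each of these verifications is routine, which is what we mean by saying the comparison ``extends directly''.
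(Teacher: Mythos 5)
Your proposal is correct and takes essentially the same route as the paper, which itself offers no more detail than asserting that Awodey's object-level comparison \cite[Prop.~2]{awodey:natural-models} extends directly to these equivalences of 1- and 2-categories. Your explicit bookkeeping --- in particular the observation that the strictly natural presheaf-level data of a pseudo natural-model map preserves reindexing on the nose and therefore matches \emph{weak} rather than pseudo CwF maps --- is precisely the routine verification the paper leaves implicit, and it agrees with how the paper sets up those map notions.
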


Finally, Van den Berg and Garner \cite[Def.~2.2.1]{garner-van-den-berg:top-and-simp-models} borrow the term ``type-category'' from Pitts \cite[Def.~6.9]{pitts:categorial-logic} but use it for a slightly weaker notion, calling Pitts’ original ones (i.e.~CwA’s) \emph{split} type-categories.
Type-categories in this sense, with the right natural definitions of maps and 2-cells, are straightforwardly shown equivalent to full comprehension categories.

\autocapsubsection{Contextual categories, and equivalents}

Contextual categories are introduced in Cartmell's dissertation \cite[\textsection 2]{cartmell:thesis}.
The definition is rather lengthy; we recall it roughly, and quickly replace it with a much simpler reformulation.

\begin{definition}[{Cartmell \cite[\textsection 2.2]{cartmell:thesis}}]
  A \defemph{contextual category} consists of \begin{enumerate*}[(1)] \item a category $\C$ equipped with a distinguished terminal object 1; \item a tree structure on $\ob{\C}$ with root $1$; \item for each non-root object $A$, a \emph{projection} map $p_A$ from $A$ to its parent; and \item pullbacks of projections along arbitrary maps, which are again projections, $f^* p_A = p_{f^*A}$, strictly functorial in that $\idmap^*A = A$, $(fg)^*A = g^* f^*A$.\end{enumerate*}

  A homomorphism of contextual categories is a functor commuting on the nose with all the given structure.
\end{definition}

The comparison with categories with attributes is direct, and implicit already in Cartmell's work \cite[\textsection 3.2]{cartmell:thesis}.
Call a category with attributes \defemph{contextual} if it is so in the sense of \cref{def:conditions-on-compcats}, when viewed as a discrete comprehension category: that is, each object is uniquely expressible as a context extension of the terminal object.

\begin{proposition}\label{prop:ctxcat-cwa}
  The 1-category $\CxlCat$ of contextual categories and homomorphisms is equivalent to the 1-category $\CwA^\strone_\cxl$ of contextual CwA’s and strict maps, and hence to the 1-category $\CompCat^\strone_{\discrete,\cxl}$ of discrete, contextual comprehension categories and strict maps.
\end{proposition}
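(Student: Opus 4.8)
The plan is to construct a direct equivalence $\CxlCat \equiv \CwA^\strone_\cxl$, realising the folklore identification of a contextual category with a contextual CwA, and then to deduce the second (``hence'') equivalence by restricting the equivalence $\CwA^\strone \equiv \CompCat_\discrete^\strone$ of \cref{prop:cwf-ccdisc} to the contextual objects on each side. Throughout, the distinguished terminal object $1$ is treated as part of the structure on both sides.

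First I would define a functor $\Phi : \CxlCat \to \CwA^\strone_\cxl$. For a contextual category $\C$, take $\Ty(\Gamma)$ to be the set of children of $\Gamma$ (objects whose parent is $\Gamma$); reindex a child $A$ along $f : \Gamma' \to \Gamma$ by the chosen pullback $f^*A$, again a child of $\Gamma'$; and set $\compext{\Gamma}{A} := A$, with structure map its projection $p_A$. The strict functoriality of the contextual pullbacks ($1^*A = A$, $(fg)^*A = g^*f^*A$) is exactly presheaf functoriality of $\Ty$, and the chosen pullback squares are the cartesian naturality squares, so $\Phi\C$ is a CwA; it is contextual because iterated extensions of $1$ are precisely tree-paths from the root, and each object lies on a unique one. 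Conversely I would define $\Psi : \CwA^\strone_\cxl \to \CxlCat$: contextuality writes each object uniquely as $\compext{1}{A_0\cdots A_{n-1}}$, yielding a length function and a tree with root $1$, parent $\compext{1}{A_0\cdots A_{n-2}}$, projections $p_{A_{n-1}}$, and pullbacks given by reindexing; presheaf functoriality of $\Ty$ supplies the strict pullback axioms, and rootedness makes $1$ terminal. Homomorphisms and strict root-preserving maps pass back and forth in the evident way, with $\bar{F}$ acting on children via the object-map of $F$.

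These functors are mutually quasi-inverse: $\Psi\Phi$ is the identity on $\CxlCat$, since tree, projections, pullbacks and root are recovered verbatim, whereas $\Phi\Psi$ is only naturally isomorphic to the identity, since the recovered type-set at $\Gamma$ is its set of children $\{\compext{\Gamma}{A}\}$, canonically but not literally the original $\Ty(\Gamma)$ — hence an equivalence rather than a strict isomorphism. For the ``hence'' clause, contextuality of a CwA is by definition (cf.\ the discussion above and \cref{def:conditions-on-compcats,def:rootedness-etc}) contextuality of its associated discrete comprehension category, and root-preservation of a map is a condition only on the base functor; so the equivalence of \cref{prop:cwf-ccdisc} carries contextual objects to contextual objects and root-preserving maps to root-preserving maps in both directions, and thus restricts to $\CwA^\strone_\cxl \equiv \CompCat^\strone_{\discrete,\cxl}$.

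The one point needing care is the behaviour of maps on the root. A general strict map of CwA's between two contextual CwA's need not send the empty context to the empty context — it may shift the length grading by the length of the image of $1$ — so the equivalence holds only once we use that the $\cxl$-decorated 2-categories are full sub-2-categories of $\CC_\pointed$ and hence consist of root-preserving maps, matching the on-the-nose preservation of $1$ demanded of contextual-category homomorphisms. The remaining obligations — that Cartmell's tree-and-strict-pullback axioms translate precisely into the presheaf-and-cartesian-square axioms, and that the length grading is well-defined — are routine bookkeeping.
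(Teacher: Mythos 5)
Your proposal is correct and is essentially the paper's own (unwritten) argument: the paper gives no proof, describing the comparison as ``direct, and implicit already in Cartmell's work'', and your functors $\Phi$, $\Psi$ --- types as children of a node, the tree recovered from unique expressibility over $1$, with $\Psi\Phi$ the identity and $\Phi\Psi$ only canonically isomorphic --- together with restricting the equivalence of \cref{prop:cwf-ccdisc} to contextual objects, spell out exactly the intended construction. Your closing caveat, that strict CwA maps between contextual CwA's need not preserve the root and that one must therefore use the pointed maps built into $\CC_\cxl$ as a sub-2-category of $\CC_\pointed$ (\cref{def:rootedness-etc}), is a real subtlety that the paper handles only implicitly, and you resolve it correctly.
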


The reader may be wondering why for contextual categories, unlike all other notions, we have only introduced strict maps and a 1-category thereof.  This is because in the contextual case, it turns out to make no difference:

\begin{proposition}
  \label{prop: maps out of contextual comp cats}
  Suppose $\C$, $\D$ are comprehension categories, with $\C$ contextual.
  \begin{enumerate}
  \item The inclusion $\CC^{\str}(\C,\D) \injto \CC^\ps(\C,\D)$ is an equivalence.
  \item If $\D$ is discrete, then any transformation $\alpha : F \to G : \C \to \D$ is uniquely determined by its root component $\alpha_1$.
  \item If $\D$ is discrete and terminally pointed, then $\CC^\ps_{\pointed}(\C,\D)$ is essentially discrete, and $\CC^\strone_{\pointed}(\C,\D) \injto \CC^\ps_{\pointed}(\C,\D)$ is an equivalence.
  \end{enumerate}
\end{proposition}

\begin{proof}
  \begin{enumerate*}[(1)]
  \item Any pseudo map may be modified to an isomorphic strict one, by induction on the contextual length of objects of $\C$.
  \item If $\alpha, \beta : F \to G$ agree at $1$, they agree at all objects by induction on length.  
  \item Direct from preceding parts.
  \end{enumerate*}  
\end{proof}

This implies that for contextual categories, unlike CwA’s and similar models, the 1-category of strict maps agrees with the 2-category of pseudo maps, so there is no need to consider pseudo maps or transformations explicitly.
\begin{corollary}
  \newcommand{\sqequiv}{\mathrel{\mkern-0.8mu \equiv \mkern-0.9mu}}
  \label{cor:equiv-pseudo-strict-contextual}
  $\CxlCat \sqequiv \CompCat^\strone_{\discrete,\cxl} \sqequiv \CompCat^\str_{\discrete,\cxl} \sqequiv \CompCat^\ps_{\discrete,\cxl}.$ \qed
\end{corollary}

In this case we drop the superscripts and write just $\CompCat_{\discrete,\cxl}$.  It is straightforward moreover to confirm:

\begin{proposition} \label{prop:cxl-core-adjoint}
  The “contextual core” construction $\C \mapsto \relslice{\C}{\diamond}$ gives right (1- and strict 2-) adjoints to the 2-fully-faithful subcategory inclusions $\CxlCat \injto \CompCat^\strone_{\discrete,\pointed}$, $\CxlCat \injto \CompCat^\ps_{\discrete,\pointed}$.
\end{proposition}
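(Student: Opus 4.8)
The plan is to exhibit $\relslice{\C}{\diamond}$ as a coreflection of $\C$ into the (full) sub-2-category of contextual objects: I first check that the construction lands in the right place and is functorial, then define the counit and verify its universal property. To begin, I would confirm that $\relslice{\C}{\diamond}$ is genuinely a discrete, contextual comprehension category. Since $\C$ is discrete and the new fibration is a pullback of $p$, it is again discrete; its objects are by construction finite sequences of types; the empty sequence is terminal (a map into it amounts to a map over $\diamond$ in $\C/\diamond$, of which there is exactly one), and every object is \emph{uniquely} such a sequence. Hence $\relslice{\C}{\diamond}$ is pointed, rooted, and contextual in the sense of \Cref{def:rootedness-etc}, so $\relslice{(-)}{\diamond}$ takes values in $\CxlCat \equiv \CompCat_{\discrete,\cxl}$ (via \Cref{prop:ctxcat-cwa}). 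Applying the type-part $\bar{F}$ of a map levelwise to sequences makes this assignment (strictly, $2$-)functorial.

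Next I would define the counit $\epsilon_\C : \relslice{\C}{\diamond} \to \C$ as the evident pointed map sending a sequence $(A_0,\ldots,A_{n-1})$ to the iterated context extension $\diamond.A_0\cdots A_{n-1}$, and a morphism of $\relslice{\C}{\diamond}$ to the underlying map of extensions in $\C$; it preserves the point (the empty sequence goes to $\diamond$) and the comprehension on the nose. The heart of the argument is the universal property of $\epsilon_\C$: for contextual $\X$, precomposition with $\epsilon_\C$ should induce a bijection of hom-sets in the strict $1$-categorical case, and the corresponding equivalence of hom-categories in the pseudo case, between $\CxlCat(\X,\relslice{\C}{\diamond})$ and $\CompCat_{\discrete,\pointed}(i\X,\C)$. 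On objects, a pointed map $F : i\X \to \C$ factors through $\epsilon_\C$ because $\X$ is contextual: each object is uniquely an iterated extension $1_{\X}.A_0\cdots A_{n-1}$, so $F$ is pinned down by its action $\bar{F}$ on the generating types, which — after transporting along the structural isomorphism $F1_{\X} \iso \diamond$ in the pseudo case — assembles into a sequence and hence a map $\hat{F} : \X \to \relslice{\C}{\diamond}$, unique with $\epsilon_\C \circ i\hat{F} = F$ in the strict case. On $2$-cells, the correspondence of transformations is exactly the essential discreteness of hom-categories out of a contextual object recorded in \Cref{prop: maps out of contextual comp cats} and \Cref{cor:equiv-pseudo-strict-contextual}: a pointed transformation is determined by its value at $1_{\X}$ and then by induction on contextual length. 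Finally I would check naturality of $\epsilon$ in $\C$, which is routine.

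The main obstacle I anticipate is reconciling the strict sequence structure of the contextual core with the only-up-to-isomorphism behaviour of pseudo maps. A pointed pseudo map sends $1_{\X}$ to an object merely isomorphic to $\diamond$ and preserves extensions only up to the coherence isomorphism $\varphi$, so the factorization $\hat{F}$ can satisfy $\epsilon_\C \circ i\hat{F} = F$ only up to that structural isomorphism; one must therefore confirm that the induced comparison of hom-categories is the claimed equivalence (and, in the strict case, a genuine bijection), transporting all type-data along $F1_{\X}\iso\diamond$ coherently. Verifying this reindexing carefully is where the real content lies; the remaining verifications are bookkeeping with the contextual slice.
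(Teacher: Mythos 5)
The paper offers no proof of this proposition at all — it is introduced with ``It is straightforward moreover to confirm'' — so your task was to supply the routine verification, and for the 1-categorical adjunction your outline does so correctly and in what is surely the intended way: $\relslice{\C}{\diamond}$ is discrete, rooted, and contextual; the counit sends a sequence to its iterated extension; and a strict, strictly pointed $F : i\X \to \C$ factors uniquely through $\epsilon_\C$ by induction on the unique expression of each object of $\X$ as an extension of $1_\X$. (Your parenthetical check that a map into the empty sequence is unique, and the implicit point that $Ff$ is a morphism over $\diamond$ because $F$ preserves projections and $1_\X$ is terminal in $\X$, are exactly the small verifications that matter.) One citation is off, though: \cref{prop: maps out of contextual comp cats} assumes the point of the \emph{target} is terminal, whereas here the target $\C$ is merely pointed, so you cannot invoke it for the 2-cells; you must run the induction directly, which does work — discreteness of the fibration forces $\bar{\alpha}$, pointedness forces $\alpha_{1_\X}$, and the compatibility condition $\gamma_A\chi'(\bar{\alpha}_A)=\alpha_{\Gamma.A}\varphi_A$ then propagates $\alpha$ up the contextual length.

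The genuine gap is the one you half-concede in your final paragraph. A \emph{strict} 2-adjunction requires composition with the counit to be an \emph{isomorphism} of hom-categories, but your factorization satisfies $\epsilon_\C \circ i\hat{F} = F$ only up to the structural isomorphism: every object in the image of $\epsilon_\C \circ i\hat{F}$ is literally an iterated extension of $\diamond$, while a pointed pseudo map $F : i\X \to \C$ may send objects of $\X$ to arbitrary objects merely isomorphic to such extensions. So your comparison functor is fully faithful and essentially surjective but not bijective on objects — already for $\X$ the terminal contextual category, the two hom-categories have different object sets whenever $\C$ contains an object isomorphic to, but distinct from, $\diamond$ — and hence what your second and third paragraphs actually establish for $\CompCat^\ps_{\discrete,\pointed}$ is a biadjunction (an equivalence of hom-categories, pseudonatural in the arguments), not the strict 2-adjunction as literally stated. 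Deferring this to ``verifying the reindexing carefully'' does not close it: no amount of coherent transport upgrades an equivalence of hom-categories to an isomorphism here. To finish, you must either (i) prove the strict 2-dimensional statement where it genuinely holds, namely for strict (strictly pointed) maps \emph{with transformations}, where your unique strict factorization extends bijectively to 2-cells by the induction above, and record the pseudo-map case as a 2-adjunction in the appropriately weakened (bicategorical) sense, mediated by $\CxlCat \equiv \CompCat^\ps_{\discrete,\cxl}$ from \cref{cor:equiv-pseudo-strict-contextual}; or (ii) explicitly flag that the proposition's ``strict 2-'' must be read in that weakened sense for the pseudo inclusion. As written, your proof plan cannot deliver the claim at face value, and the place it fails is precisely the step you postponed.
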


Most presentations of specific contextual categories in the literature — arguably all — can be read as spelling out the contextual core of some simpler, non-contextual category with families.

Later reformulations of contextual categories include the \defemph{C-systems} of Voevodsky \cite[Def.~2.1]{Voevodsky:subsystems} (emphasising them as set-level rather than categorical structures), the \defemph{B-systems} of Voevodsky \cite{voevodsky:b-systems} (an alternative organisation of the dependency between sorts), and the \defemph{$\{w,p,s\}$-GATs} of Garner \cite{garner:combinatorial} (elucidating their combinatorial structure).
In each case, the key parts of an equivalence of 1-categories with $\CxlCat$ are sketched in the cited works introducing them; an equivalence of 1-categories of C- and B-systems is presented explicitly by Ahrens, Emmenegger, North, and Rijke \cite[Thm.~4.1]{ahrens-emmenegger-north-rijke:b-c-systems}.

\autocapsection{Conclusion}\label{sec:conclusion}

\subsubsection*{\titlecaseifaplas{Summary of results}} %

We can now recapitulate the summary diagrams from the introduction more precisely.
\Cref{fig:summary-types-as-maps-unrooted,fig:summary-types-as-maps-rooted} summarise the notions where types are certain maps, in the rooted and unrooted versions respectively; \Cref{fig:summary-types-as-primitive} similarly summarises the notions with primitive types.

As always, $\iso$ and $\equiv$ denote isomorphisms and equivalences respectively, $\injto$ denotes 2-fully-faithful 2-functors (usually literal sub-2-category inclusions),
and super\-script/sub\-script notations on (2-)categories follow \cref{def:conditions-on-compcats} et seq.

\begin{figure}[htbp]
\[
\begin{tikzcd}[row sep=1.8em]
  \CC \ar[d,ladjd, "\dref{thm:fullification}"'] & 
  \CCs \ar[d,ladjd]
  \ar[l,hook']
  \\
  \CC_\full \ar[d,ladjd,"\dref{thm:fullification}"'] \ar[u,inj] \ar[u,adjsymbol,adjshiftl] & 
  \CCs_\full \ar[d,ladjd]
  \ar[l,hook']
  \ar[u,inj] \ar[u,adjsymbol,adjshiftl]
    \\
  \CC_{\subcat}
  \ar[u,inj,weq'] \ar[u,adjsymbol,adjshiftl] \ar[d,ladjd,"\dref{thm:fullification}"']
  & 
  \CCs_{\subcat}
  \ar[u,inj,weq'] \ar[u,adjsymbol,adjshiftl]
  \ar[d,ladjd,"\dref{thm: adj between dmc and sdmc}"']
  \ar[l,hook']
  &
  \sDMC \ar[l,<->,iso', "\dref{thm: iso between sDMC and full comp cats w subcategory inclusion}"]
  \ar[d,ladjd, "\dref{thm: adj between dmc and sdmc}"']
    \\
    \CC_{\replete} \ar[u,inj,weq'] \ar[u,adjsymbol,adjshiftl]
    &
    \CCs_{\replete}
    \ar[l,hook',weq', "\dref{thm: iso between DMC and comp cat variant}"]
    \ar[u,inj] \ar[u,adjsymbol,adjshiftl]
    &
    \DMC  \ar[l,<->,iso', "\dref{thm: iso between DMC and comp cat variant}"]
    \ar[u,inj] \ar[u,adjsymbol,adjshiftl]
  \end{tikzcd}
\]
\caption{Notions with types as maps, unrooted} \label{fig:summary-types-as-maps-unrooted}
\end{figure}

\begin{figure}[!htbp]
\[
\begin{tikzcd}[row sep=1.8em]
    \CC_\rooted \ar[d,ladjd, "\dref{thm:fullification,thm:equivs-btn-rooted-dmcs-and-compcats}"'] & 
    \CCs_\rooted \ar[d,ladjd]
    \ar[l,hook']
  \\ 
    \CC_{\rooted,\full} \ar[d,ladjd, "\dref{thm:fullification,thm:equivs-btn-rooted-dmcs-and-compcats}"'] \ar[u,inj] \ar[u,adjsymbol,adjshiftl] & 
    \CCs_{\rooted,\full} \ar[d,ladjd]
    \ar[l,hook']
    \ar[u,inj] \ar[u,adjsymbol,adjshiftl]
  \\ 
    \CC_{\rooted,\subcat}
    \ar[u,inj,weq'] \ar[u,adjsymbol,adjshiftl] \ar[d,ladjd, "\dref{thm:fullification,thm:equivs-btn-rooted-dmcs-and-compcats}"']
    & 
    \CCs_{\rooted,\subcat}
    \ar[u,inj,weq'] \ar[u,adjsymbol,adjshiftl]
    \ar[d,ladjd, "\dref{thm: adj between dmc and sdmc,thm:equivs-btn-rooted-dmcs-and-compcats}"']
    \ar[l,hook']
    &
    \sDMC_\rooted \ar[l,<->,iso', "\dref{thm: iso between sDMC and full comp cats w subcategory inclusion,thm:equivs-btn-rooted-dmcs-and-compcats}"]
    \ar[d,ladjd, "\dref{thm: adj between dmc and sdmc,thm:equivs-btn-rooted-dmcs-and-compcats}"']
  \\
    \CC_{\rooted,\replete} \ar[u,inj,weq'] \ar[u,adjsymbol,adjshiftl] \ar[d,ladjd, "\dref{thm:left_adjoint_to_compcl_incl}"']
    &
    \CCs_{\rooted,\replete}
    \ar[l,hook',weq', "\dref{thm: iso between DMC and comp cat variant,thm:equivs-btn-rooted-dmcs-and-compcats}"]
    \ar[u,inj] \ar[u,adjsymbol,adjshiftl] \ar[d,ladjd]
    &
    \DMC_\rooted  \ar[l,<->,iso', "\dref{thm: iso between DMC and comp cat variant,thm:equivs-btn-rooted-dmcs-and-compcats}"]
    \ar[u,inj] \ar[u,adjsymbol,adjshiftl] \ar[d,ladjd,"\dref{thm:left_adjoint_to_compcl_incl}"']
  \\
    \CC_{\rooted,\replete,\compclosed} \ar[u,inj] \ar[u,adjsymbol,adjshiftl] \ar[d,radjd, "\dref{thm:lex-to-clan-radj}"]
    & \CCs_{\rooted,\replete,\compclosed}
    \ar[l,hook',weq', "\dref{thm: clans are equiv to full repl sub comp CCs}"] \ar[u,inj] \ar[u,adjsymbol,adjshiftl] \ar[d,radjd]
    & \Clan \ar[u,inj] \ar[u,adjsymbol,adjshiftl] \ar[d,radjd, "\dref{thm:lex-to-clan-radj}"] \ar[l,<->,iso', "\dref{thm: clans are equiv to full repl sub comp CCs}"] \ar[u,inj] \ar[u,adjsymbol,adjshiftl]
  \\
    \CC_{\rooted,\trivial}\ar[u,inj] \ar[u,adjsymbol,adjshiftr]
    & \CCs_{\rooted,\trivial} \ar[l,hook',weq', "\dref{lem:lex_compcat}"] \ar[u,inj] \ar[u,adjsymbol,adjshiftr]
    & \Lex \ar[l,<->,iso', "\dref{lem:lex_compcat}"]\ar[u,inj] \ar[u,adjsymbol,adjshiftr]
  \end{tikzcd}
\]
\caption{Notions with types as maps, rooted} \label{fig:summary-types-as-maps-rooted}
\end{figure}

\begin{figure}[!htbp]
\[
\begin{tikzcd}[row sep=1.3em,column sep={4.7em,between origins}]
  \CC_\discrete \ar[rr,<->,weq, "\dref{prop:cwf-ccdisc}"']
  & & \CwA^\ps \ar[rr,<->,weq, "\dref{prop:cwf-cwa}"']
  & &[-3.4em] \CwF^\wk \ar[rr,<->,weq,"\dref{prop:nat_mod-cwf}"']
  & &[-3.0em] \NatMod^\ps &
\\
  & \CC^\strone_\discrete \ar[ul] %
  & & \CwA^\strone \ar[ul] \ar[rr,<->,weq,"\dref{prop:cwf-cwa}"']
  & & \CwF^\strone \ar[ul] \ar[rr,<->,weq,"\dref{prop:nat_mod-cwf}"']
  & & \NatMod^\strone \ar[ul]
\\[0.8em]
  \CC_{\discrete,\pointed} \ar[uu] \ar[rr,<->,weq,pos=0.6]
   \ar[dd,adjsymbol,pos=0.64] \ar[dd,adjshiftl,"\dref{prop:cxl-core-adjoint}" pos=0.65]
  & & \CwA^\ps_\pointed \ar[uu]
  \ar[from=ul,to=ur,<->,weq,pos=0.68,crossing over, "\dref{prop:cwf-ccdisc}"']
\\ 
  & \CCs_{\discrete,\pointed} \ar[uu,crossing over] \ar[ul] %
  & & \CwA^\strone_\pointed \ar[uu] \ar[ul]
\\[0.8em]
  \CC_{\discrete,\cxl} \ar[uu,inj,adjshiftl]
  \ar[rr,<->,bend left=8,start anchor={[xshift=-0.3em,yshift=-0.1ex]},weq,pos=0.65, "\dref{prop:ctxcat-cwa}"' pos=1]
  & & \CxlCat \ar[uu,inj] \ar[ur,inj]
   \ar[from=ul,to=ur,<->,crossing over,weq,pos = 0.6]
\\
  & \CCs_{\discrete,\cxl} \ar[uu,inj,adjshiftl,crossing over]
    \ar[ul,<->,weq',pos=0.4,"\dref{cor:equiv-pseudo-strict-contextual}" pos=0.65]
    \ar[ur,<->,weq,,pos=0.25,"\dref{cor:equiv-pseudo-strict-contextual}"' pos=0.5]
  \ar[from=uu,adjshiftl,crossing over,"\dref{prop:cxl-core-adjoint}" pos=0.19]
  \ar[from=uu,adjsymbol,pos=0.18]
  \end{tikzcd}
\]
\caption{Notions with types primitive} \label{fig:summary-types-as-primitive}
\end{figure}

\subsubsection*{\titlecaseifaplas{Related work}}%

Several other surveys cover ground overlapping with the present paper, some including more detailed analyses of certain models or constructions.

An important early survey is Blanco \cite{blanco:relating-categorical-approaches}, very comparable to the present work but purely 1-categorical and narrower in scope: Blanco relates categories with attributes, contextual categories, and a version of display map categories by embedding them into the (1-)category of comprehension categories and strict maps.

Similarly, Subramaniam \cite[\textsection 1.4]{subramaniam:phd} compares 1-categories of various categorical structures including Lawvere theories and contextual categories.

Ahrens, Lumsdaine, and Voevodsky \cite{ahrens-lumsdaine-voevodsky} compare (split) type categories, categories with families, and relative universe categories, working in univalent type theory and (enabled by this) comparing the \emph{types} of these structures, without considering maps.

Coraglia and Emmenegger \cite{coraglia-emmenegger:2-categorical} construct an equivalence between a 2-cate\-gory of comprehension categories and \emph{lax} maps, and a 2-category of generalized categories with families.
Restricted to discrete comprehension categories and categories with families \cite[Cor.~4.19]{coraglia-emmenegger:2-categorical}, this coincides with our \cref{prop:cwf-ccdisc}.

\subsubsection*{\titlecaseifaplas{Further directions}}%
  For the sake of focus, we have left some variations unexplored here.

  Firstly, we have disregarded questions of whether existence conditions could or should be replaced by chosen structure; for instance, taking fibrations as cloven.
  With the maps we have considered, this is mostly inconsequential;
  but the relationship to maps preserving this structure (e.g.\ cloven maps) could profitably admit more careful analysis.

  Secondly, we have been agnostic about the foundations we work in.
  However, some 1- and strict 2-categorical results in \cref{sec:types-as-primitive} rely on a setting where strict equality of objects in a category is available, e.g., a first-order set-theoretic setting, or using set-based categories in univalent foundations.
  It would be interesting to analyse the relationship between these structures using \emph{univalent categories} in univalent foundations, where equality of objects is not generally strict. 

  Indeed, there is significant interplay between these two issues.
  Working with univalent categories makes many questions of structure vs.\ property moot: essentially unique structure there is genuinely unique, so such structure \emph{is} a mere property.
  For instance, over univalent categories, cloven fibrations coincide with fibrations, chosen pullbacks coincide with their mere existence, and so on;
  so several of the property/structure distinctions ignored here become, in fact, trivial.

\begin{credits}
\subsubsection*{Acknowledgements}
The work presented in this article benefited greatly from meetings and visits supported by COST Action CA20111 EuroProofNet, funded by COST (European Cooperation in Science and Technology), \url{https://www.cost.eu}.
The second author was also supported by the Knut and Alice Wallenberg Foundation project “Type Theory for Mathematics and Computer Science” (PI Thierry Coquand).
This material is based upon work supported by the Air Force Office of Scientific
Research under award number FA9550-21-1-0334.

We are also grateful to many colleagues for fruitful conversations on this topic over the years, including particularly Peter Dybjer, Mike Shulman, André Joyal, Nicola Gambino, and Chaitanya Leena Subramaniam. 
\end{credits}

\ifaplas
  \bibliographystyle{splncs04}
  \bibliography{aplas}
\else
  \bibliographystyle{amsalphaurlmod}
  \bibliography{general-bib}

\newcommand{\etalchar}[1]{$^{#1}$}
\providecommand{\bysame}{\leavevmode\hbox to3em{\hrulefill}\thinspace}
\providecommand{\MR}{\relax\ifhmode\unskip\space\fi MR }
% \MRhref is called by the amsart/book/proc definition of \MR.
\providecommand{\MRhref}[2]{%
  \href{http://www.ams.org/mathscinet-getitem?mr=#1}{#2}
}
\providecommand{\href}[2]{#2}
\begin{thebibliography}{BCM{\etalchar{+}}20}

\bibitem[AENR23]{ahrens-emmenegger-north-rijke:b-c-systems}
Benedikt Ahrens, Jacopo Emmenegger, Paige~Randall North, and Egbert Rijke, \emph{{B}-systems and {C}-systems are equivalent}, The Journal of Symbolic Logic (2023), 1–9, \href {http://dx.doi.org/10.1017/jsl.2023.41} {\path{doi:10.1017/jsl.2023.41}}.

\bibitem[AHR99]{adamek-hebert-rosicky:on-eats}
Ji{\v{r}}{\'{\i}} Ad\'{a}mek, Michel H\'{e}bert, and Ji{\v{r}}{\'{\i}} Rosick\'{y}, \emph{On essentially algebraic theories and their generalizations}, Algebra Universalis \textbf{41} (1999), no.~3, 213--227, \href {http://dx.doi.org/10.1007/s000120050111} {\path{doi:10.1007/s000120050111}}.

\bibitem[ALN25]{ahrens-lumsdaine-north:comparing}
Benedikt Ahrens, Peter~LeFanu Lumsdaine, and Paige~Randall North, \emph{Comparing semantic frameworks for dependently-sorted algebraic theories}, Programming Languages and Systems (Oleg Kiselyov, ed.), Springer Nature Singapore, 2025, pp.~3--22, \href {http://dx.doi.org/10.1007/978-981-97-8943-6_1} {\path{doi:10.1007/978-981-97-8943-6_1}}.

\bibitem[ALV18]{ahrens-lumsdaine-voevodsky}
Benedikt Ahrens, Peter~LeFanu Lumsdaine, and Vladimir Voevodsky, \emph{Categorical structures for type theory in univalent foundations}, {Logical Methods in Computer Science} \textbf{14} (2018), no.~3, 1--18, \href {http://arxiv.org/abs/1705.04310} {\path{arXiv:1705.04310}}, \href {http://dx.doi.org/10.23638/LMCS-14(3:18)2018} {\path{doi:10.23638/LMCS-14(3:18)2018}}, \url{https://lmcs.episciences.org/4814}.

\bibitem[AR94]{adamek-rosicky}
Ji{\v{r}}{\'{\i}} Ad{\'a}mek and Ji{\v{r}}{\'{\i}} Rosick{\'y}, \emph{Locally presentable and accessible categories}, London Mathematical Society Lecture Note Series, vol. 189, Cambridge University Press, Cambridge, 1994, \href {http://dx.doi.org/10.1017/CBO9780511600579} {\path{doi:10.1017/CBO9780511600579}}.

\bibitem[AW09]{awodey-warren}
Steve Awodey and Michael~A. Warren, \emph{Homotopy theoretic models of identity types}, Math. Proc. Camb. Phil. Soc. \textbf{146} (2009), no.~1, 45--55, \href {http://arxiv.org/abs/0709.0248} {\path{arXiv:0709.0248}}, \href {http://dx.doi.org/10.1017/S0305004108001783} {\path{doi:10.1017/S0305004108001783}}.

\bibitem[Awo18]{awodey:natural-models}
Steve Awodey, \emph{Natural models of homotopy type theory}, Math. Struct. Comput. Sci. \textbf{28} (2018), no.~2, 241--286, \href {http://arxiv.org/abs/1406.3219} {\path{arXiv:1406.3219}}, \href {http://dx.doi.org/10.1017/S0960129516000268} {\path{doi:10.1017/S0960129516000268}}.

\bibitem[BCM{\etalchar{+}}20]{birkedal-et-al:modal-dtt-and-dras}
Lars Birkedal, Ranald Clouston, Bassel Mannaa, Rasmus~Ejlers M{\o}gelberg, Andrew~M. Pitts, and Bas Spitters, \emph{Modal dependent type theory and dependent right adjoints}, Math. Structures Comput. Sci. \textbf{30} (2020), no.~2, 118--138, \href {http://arxiv.org/abs/1804.05236} {\path{arXiv:1804.05236}}, \href {http://dx.doi.org/10.1017/s0960129519000197} {\path{doi:10.1017/s0960129519000197}}.

\bibitem[Bla91]{blanco:relating-categorical-approaches}
Javier Blanco, \emph{Relating categorical approaches to type dependency}, 1991, Masters thesis, Univ.~Nijmegen.

\bibitem[Car78]{cartmell:thesis}
John Cartmell, \emph{Generalised algebraic theories and contextual categories}, Ph.D. thesis, Oxford, 1978.

\bibitem[Car86]{cartmell:generalised-algebraic-theories}
\bysame, \emph{Generalised algebraic theories and contextual categories}, Ann. Pure Appl. Logic \textbf{32} (1986), no.~3, 209--243.

\bibitem[CD14]{clairambault-dybjer:biequivalence}
Pierre Clairambault and Peter Dybjer, \emph{The biequivalence of locally cartesian closed categories and {M}artin-{L}\"{o}f type theories}, Math. Structures Comput. Sci. \textbf{24} (2014), no.~6, e240606, 54, \href {http://arxiv.org/abs/1112.3456} {\path{arXiv:1112.3456}}, \href {http://dx.doi.org/10.1017/S0960129513000881} {\path{doi:10.1017/S0960129513000881}}.

\bibitem[CE24]{coraglia-emmenegger:2-categorical}
Greta Coraglia and Jacopo Emmenegger, \emph{A 2-categorical analysis of context comprehension}, Theory and Applications of Categories \textbf{41} (2024), no.~42, 1476--1512, \href {http://arxiv.org/abs/2403.03085} {\path{arXiv:2403.03085}}, \url{http://www.tac.mta.ca/tac/volumes/41/42/41-42abs.html}.

\bibitem[CGH14]{curien-garner-hofmann:revisiting}
Pierre{-}Louis Curien, Richard Garner, and Martin Hofmann, \emph{Revisiting the categorical interpretation of dependent type theory}, Theor. Comput. Sci. \textbf{546} (2014), 99--119, \href {http://dx.doi.org/10.1016/J.TCS.2014.03.003} {\path{doi:10.1016/J.TCS.2014.03.003}}, \url{https://doi.org/10.1016/j.tcs.2014.03.003}.

\bibitem[dB20]{boer:initiality}
Menno de~Boer, \emph{A proof and formalization of the initiality conjecture of dependent type theory}, 2020, Licentiate thesis, Stockholm University, \url{http://www.diva-portal.org/smash/record.jsf?pid=diva2%3A1431287}.

\bibitem[Dyb96]{dybjer:internal-type-theory}
Peter Dybjer, \emph{Internal type theory}, Types for proofs and programs ({T}orino, 1995) (Berlin, Heidelberg) (Stefano Berardi and Mario Coppo, eds.), Lecture Notes in Comput. Sci., vol. 1158, Springer, 1996, pp.~120--134, \href {http://dx.doi.org/10.1007/3-540-61780-9_66} {\path{doi:10.1007/3-540-61780-9_66}}.

\bibitem[Fio12]{fiore:cwf-repmap-slides}
Marcelo Fiore, \emph{Discrete generalised polynomial functors}, 2012, Slides from talk given at ICALP 2012, \url{http://www.cl.cam.ac.uk/~mpf23/talks/ICALP2012.pdf}.

\bibitem[Fre72]{freyd:aspects-of-topoi}
Peter Freyd, \emph{Aspects of topoi}, Bulletin of the Australian Mathematical Society \textbf{7} (1972), no.~1, 1–76, \href {http://dx.doi.org/10.1017/S0004972700044828} {\path{doi:10.1017/S0004972700044828}}.

\bibitem[Gar15]{garner:combinatorial}
Richard Garner, \emph{Combinatorial structure of type dependency}, Journal of Pure and Applied Algebra \textbf{219} (2015), no.~6, 1885--1914, \href {http://arxiv.org/abs/1402.6799} {\path{arXiv:1402.6799}}, \href {http://dx.doi.org/10.1016/j.jpaa.2014.07.015} {\path{doi:10.1016/j.jpaa.2014.07.015}}, \url{https://www.sciencedirect.com/science/article/pii/S002240491400200X}.

\bibitem[GU71]{gabriel-ulmer}
Peter Gabriel and Friedrich Ulmer, \emph{Lokal pr\"{a}sentierbare {K}ategorien}, Lecture Notes in Mathematics, vol. 221, Springer-Verlag, Berlin-New York, 1971, \href {http://dx.doi.org/10.1007/BFb0059396} {\path{doi:10.1007/BFb0059396}}.

\bibitem[Hof95]{hofmann:lcccs}
Martin Hofmann, \emph{On the interpretation of type theory in locally {C}artesian closed categories}, Computer science logic ({K}azimierz, 1994), Lecture Notes in Comput. Sci., vol. 933, Springer, Berlin, 1995, pp.~427--441, \href {http://dx.doi.org/10.1007/BFb0022273} {\path{doi:10.1007/BFb0022273}}.

\bibitem[Hof97]{hofmann:syntax-and-semantics}
\bysame, \emph{Syntax and semantics of dependent types}, Semantics and logics of computation ({C}ambridge, 1995), Publ. Newton Inst., vol.~14, Cambridge Univ. Press, Cambridge, 1997, pp.~79--130, \href {http://dx.doi.org/10.1017/CBO9780511526619.004} {\path{doi:10.1017/CBO9780511526619.004}}.

\bibitem[HP89]{hyland-pitts:theory-of-constructions}
J.~Martin~E. Hyland and Andrew~M. Pitts, \emph{The theory of constructions: categorical semantics and topos-theoretic models}, Categories in computer science and logic ({B}oulder, {CO}, 1987), Contemp. Math., vol.~92, Amer. Math. Soc., Providence, RI, 1989, pp.~137--199, \href {http://dx.doi.org/10.1090/conm/092/1003199} {\path{doi:10.1090/conm/092/1003199}}.

\bibitem[Jac93]{jacobs:comprehension-categories}
Bart Jacobs, \emph{Comprehension categories and the semantics of type dependency}, Theoret. Comput. Sci. \textbf{107} (1993), no.~2, 169--207, \href {http://dx.doi.org/10.1016/0304-3975(93)90169-T} {\path{doi:10.1016/0304-3975(93)90169-T}}.

\bibitem[Joh02]{johnstone:elephant}
Peter~T. Johnstone, \emph{Sketches of an elephant: a topos theory compendium}, Oxford Logic Guides, vol. 43,44, The Clarendon Press Oxford University Press, New York, 2002.

\bibitem[Joy17]{joyal-2017:notes-on-clans}
Andre Joyal, \emph{Notes on clans and tribes}, Unpublished notes, 2017, \href {http://arxiv.org/abs/1710.10238} {\path{arXiv:1710.10238}}, \url{https://arxiv.org/abs/1710.10238}.

\bibitem[Kel82]{kelly:on-the-eat}
G.~M. Kelly, \emph{On the essentially-algebraic theory generated by a sketch}, Bull. Austral. Math. Soc. \textbf{26} (1982), no.~1, 45--56, \href {http://dx.doi.org/10.1017/S0004972700005591} {\path{doi:10.1017/S0004972700005591}}.

\bibitem[KL21]{kapulkin-lumsdaine:inverse-diagrams}
Krzysztof Kapulkin and Peter~LeFanu Lumsdaine, \emph{Homotopical inverse diagrams in categories with attributes}, J. Pure Appl. Algebra \textbf{225} (2021), no.~4, Paper No. 106563, 44, \href {http://arxiv.org/abs/1808.01816} {\path{arXiv:1808.01816}}, \href {http://dx.doi.org/10.1016/j.jpaa.2020.106563} {\path{doi:10.1016/j.jpaa.2020.106563}}.

\bibitem[LNS22]{lucatelli-nunes-sousa}
Fernando Lucatelli~Nunes and Lurdes Sousa, \emph{On lax epimorphisms and the associated factorization}, Journal of Pure and Applied Algebra \textbf{226} (2022), no.~12, 107126, \href {http://dx.doi.org/10.1016/j.jpaa.2022.107126} {\path{doi:10.1016/j.jpaa.2022.107126}}, \url{https://www.sciencedirect.com/science/article/pii/S0022404922001220}.

\bibitem[Mak95]{makkai:FOLDS}
Michael Makkai, \emph{First order logic with dependent sorts, with applications to category theory}, Unpublished note, 1995, \url{http://www.math.mcgill.ca/makkai/folds/foldsinpdf/FOLDS.pdf}.

\bibitem[ML84]{martin-lof:bibliopolis}
Per Martin-L{\"o}f, \emph{Intuitionistic type theory}, Studies in Proof Theory. Lecture Notes, vol.~1, Bibliopolis, Naples, 1984.

\bibitem[Mog91]{moggi:program-modules}
Eugenio Moggi, \emph{A category-theoretic account of program modules}, Math. Structures Comput. Sci. \textbf{1} (1991), no.~1, 103--139, \href {http://dx.doi.org/10.1017/S0960129500000074} {\path{doi:10.1017/S0960129500000074}}.

\bibitem[New18]{newstead:phd}
Clive Newstead, \emph{Algebraic models of dependent type theory}, Ph.D. thesis, Carnegie Mellon University, 2018, \href {http://arxiv.org/abs/2103.06155} {\path{arXiv:2103.06155}}, \url{https://arxiv.org/abs/2103.06155}.

\bibitem[Pit00]{pitts:categorial-logic}
Andrew~M. Pitts, \emph{Categorical logic}, Handbook of Logic in Computer Science, vol.~5, Oxford Univ. Press, New York, 2000, pp.~39--128, \href {http://dx.doi.org/10.1093/oso/9780198537816.001.0001} {\path{doi:10.1093/oso/9780198537816.001.0001}}.

\bibitem[Pow98]{power:2-categories}
John Power, \emph{2-categories}, Tech. Report NS-98-7, Basic Research in Computer Science, Aarhus, August 1998, \url{https://www.brics.dk/NS/98/7/index.html}.

\bibitem[PV07]{palmgren-vickers}
Erik Palmgren and Steven~J. Vickers, \emph{Partial horn logic and {C}artesian categories}, Ann. Pure Appl. Logic \textbf{145} (2007), no.~3, 314--353, \href {http://dx.doi.org/10.1016/j.apal.2006.10.001} {\path{doi:10.1016/j.apal.2006.10.001}}.

\bibitem[Str91]{streicher:semantics-book}
Thomas Streicher, \emph{Semantics of type theory}, Progress in Theoretical Computer Science, Birkh\"a{}user, Boston, MA, 1991, \href {http://dx.doi.org/10.1007/978-1-4612-0433-6} {\path{doi:10.1007/978-1-4612-0433-6}}.

\bibitem[Sub21]{subramaniam:phd}
Chaitanya~Leena Subramaniam, \emph{From dependent type theory to higher algebraic structures}, Ph.D. thesis, Université Paris Diderot, 2021, \href {http://arxiv.org/abs/2110.02804} {\path{arXiv:2110.02804}}, \href {http://dx.doi.org/10.48550/arXiv.2110.02804} {\path{doi:10.48550/arXiv.2110.02804}}.

\bibitem[Tay86]{taylor:phd}
Paul Taylor, \emph{Recursive domains, indexed category theory and polymorphism}, Ph.D. thesis, University of Cambridge, 1986, \url{https://www.paultaylor.eu/domains/recdic.pdf}.

\bibitem[Tay99]{taylor:practical-foundations}
\bysame, \emph{Practical foundations of mathematics}, Cambridge Studies in Advanced Mathematics, vol.~59, Cambridge University Press, Cambridge, 1999, \url{https://www.paultaylor.eu/~pt/prafm/}.

\bibitem[Uem23]{uemura:general-framework}
Taichi Uemura, \emph{A general framework for the semantics of type theory}, Math. Struct. Comput. Sci. \textbf{33} (2023), no.~3, 134--179, \href {http://arxiv.org/abs/1904.04097} {\path{arXiv:1904.04097}}, \href {http://dx.doi.org/10.1017/S0960129523000208} {\path{doi:10.1017/S0960129523000208}}, \url{https://doi.org/10.1017/s0960129523000208}.

\bibitem[vdBG12]{garner-van-den-berg:top-and-simp-models}
Benno van~den Berg and Richard Garner, \emph{Topological and simplicial models of identity types}, ACM Trans. Comput. Log. \textbf{13} (2012), no.~1, Art. 3, 44, \href {http://arxiv.org/abs/1007.4638v1} {\path{arXiv:1007.4638v1}}, \href {http://dx.doi.org/10.1145/2071368.2071371} {\path{doi:10.1145/2071368.2071371}}.

\bibitem[Voe16a]{voevodsky:b-systems}
Vladimir Voevodsky, \emph{B-systems}, Unpublished manuscript, revision of \href{https://arxiv.org/abs/1410.5389}{arXiv:1410.5389}, 2016, \url{https://www.math.ias.edu/Voevodsky/files/files-annotated/Dropbox/Unfinished_papers/Type_systems/Notes_on_Type_Systems/Bsystems/B_systems_current.pdf}.

\bibitem[Voe16b]{Voevodsky:subsystems}
\bysame, \emph{Subsystems and regular quotients of {C}-systems}, A panorama of mathematics: pure and applied, Contemp. Math., vol. 658, Amer. Math. Soc., Providence, RI, 2016, pp.~127--137, \href {http://arxiv.org/abs/1406.7413} {\path{arXiv:1406.7413}}, \href {http://dx.doi.org/10.1090/conm/658/13124} {\path{doi:10.1090/conm/658/13124}}.

\bibitem[Voe23]{voevodsky:c-system-of-a-module}
\bysame, \emph{{C}-system of a module over a {$\textit{Jf}$}-relative monad}, Journal of Pure and Applied Algebra \textbf{227} (2023), no.~6, 107283, \href {http://arxiv.org/abs/1602.00352} {\path{arXiv:1602.00352}}, \href {http://dx.doi.org/10.1016/j.jpaa.2022.107283} {\path{doi:10.1016/j.jpaa.2022.107283}}.

\end{thebibliography}
\fi
  
\end{document}

%%% Local Variables:
%%% mode: latex
%%% TeX-master: t
%%% End: